\newtheorem{theorem}{Theorem}
\newtheorem{corollary}{Corollary}
\newtheorem{lemma}{Lemma}
\newtheorem{remark}{Remark}
\newtheorem{algorithm}{Algorithm}
\newcommand{\diff}{\mathrm{d}}
\renewcommand{\le}{\leqslant}
\renewcommand{\ge}{\geqslant}
\renewcommand{\leq}{\leqslant}
\renewcommand{\geq}{\geqslant}
\DeclareMathOperator{\var}{Var}
\DeclareMathOperator{\cov}{Cov}
\DeclareMathOperator{\argmin}{argmin}
\title{Asymptotic optimality of adaptive importance sampling}
\author{
  Bernard Delyon   
 and 
  Fran\c cois Portier \\
}
\date{University of Rennes 1 and Télécom ParisTech}
\begin{document}

\maketitle

\begin{abstract}
\textit{Adaptive importance sampling} (AIS) uses past samples to update the \textit{sampling policy} $q_t$ at each stage $t$. Each stage $t$ is formed with two steps : (i) to explore the space with $n_t$ points according to $q_t$ and (ii) to exploit the current amount of information to update the sampling policy. The very fundamental question raised in this paper concerns the behavior of empirical sums based on AIS. Without making any assumption on the \textit{allocation policy} $n_t$, the theory developed involves no restriction on the split of computational resources between the explore (i) and the exploit (ii) step. It is shown that AIS is asymptotically optimal : the asymptotic behavior of AIS is the same as some ``oracle'' strategy that knows the targeted sampling policy from the beginning. From a practical perspective, weighted AIS is introduced, a new method that allows to forget poor samples from early stages.
\end{abstract}

\section{Introduction}

The adaptive choice of a sampling policy lies at the heart of many fields of \textit{Machine Learning} where former Monte Carlo 
experiments guide the forthcoming ones. This includes for instance \textit{reinforcment learning} \cite{jie+a:2010,peters+m+a:2010,schulman+l+a+j+m:2015} where the optimal policy maximizes the reward; inference in \textit{Bayesian} \cite{delmoral+d+j:2006} or \textit{graphical models}
\cite{lou2017dynamic}; \textit{optimization} based on stochastic gradient descent \cite{zhao+z:2015} or without using the gradient \cite{hashimoto2018derivative}; \textit{rejection sampling} \cite{erraqabi+v+c+m:2016}. \textit{Adaptive importance sampling} (AIS) \cite{ho+b:1992,cappe+d+g+m+r:2008}, which extends the basic Monte Carlo integration approach, offers a natural probabilistic framework to describe the evolution of sampling policies. The present paper establishes, under fairly reasonable conditions, that AIS is asymptotically optimal, i.e., learning the sampling policy has no cost asymptotically.


Suppose we are interested in computing some integral value $ \int \varphi  $, where $\varphi:\mathbb R^d \to \mathbb R$ is called the integrand. 
The importance sampling estimate of $ \int \varphi  $ based on the sampling policy $q$, is given by
\begin{align}\label{eq:unnormalization}
 n^{-1} \sum_{i=1}^n \frac{\varphi(x_i)}{q(x_i)},
\end{align}
where $(x_1,\ldots x_n)\overset{\text{i.i.d.}}{\sim} q$. The previous estimate is unbiased. It is well known, e.g., \cite{hammersley+h:1964,evans:2000}, that the optimal sampling policy, regarding the variance, is when $q$ is proportional to $ |\varphi| $. A slightly different context where importance sampling still applies is Bayesian estimation. Here the targeted quantity is $ \int \varphi \pi $ and we only have access to an unnormalized version $\pi_u$ of the density $\pi= \pi_u / \int \pi_u $. Estimators usually employed are
\begin{align}\label{eq:normalization}
{ \sum_{i=1}^n \frac{\varphi(x_i) \pi_u (x_i)}{q(x_i)} } \left/  { \sum_{i=1}^n \frac{ \pi_u (x_i)}{q(x_i)} } \right. .
\end{align}
In this case, the optimal sampling policy $q$ is proportional to $ |\varphi - \int \varphi \pi |\pi  $ (see \cite{douc+g+m+r:2007b} or Remark \ref{rk:opt_pol_norm} below).

Both previous frameworks, namely, the classical integration problem and the Bayesian estimation problem, are examples where the sampling policy can be chosen appropriately. Because appropriate policies naturally depend on $\varphi$ or $\pi$, we generally cannot simulate from them. They are then approximated adaptively, by densities from which we can simulate, using the information gathered from the past stages. This is the very spirit of AIS. At each stage $t$, the value $ I_t$, standing for the current estimate, is updated using i.i.d. new samples $x_{t,1} ,\ldots x_{t,n_t} $ from $  q_{t}$, where $ q_{t}$ is a probability density function that might depend on the past stages $1,\ldots t-1$. The distribution $ q_t$, called the \textit{sampling policy}, targets some optimal, at least suitable, sampling policy.
The sequence $(n_t)\subset \mathbb N^*$, called the \textit{allocation policy}, contains the number of particles generated at each stage. 
 
 The following algorithm describes the AIS schemes for the classical integration problem. For the Bayesian problem, it suffices to change the estimate according to (\ref{eq:normalization}). This is a generic representation of AIS as no explicit update rule is specified (this will be discussed just below).

\begin{algorithm}[AIS]\label{algo:pop_monte_carlo}\ \\
\begin{minipage}{13cm}
\textbf{Inputs}: The number of stages $T \in \mathbb N^*$, the allocation policy $(n_t)_{t=1,\ldots T}\subset \mathbb N^*$, the sampler update procedure, the initial density $q_0$.
\medskip\hrule\medskip
Set $  S_0 = 0$, $N_0 = 0$. \noindent For $t$ in $1,\ldots T$ :
\begin{enumerate}[(i)]
\item (Explore) Generate $(x_{t,1},\ldots x_{t,n_t})$ from $ q_{t-1}$
\item (Exploit) 
\begin{enumerate}[(a)]
\item \begin{minipage}[t]{.4\textwidth} Update the estimate: \\ \end{minipage}\begin{minipage}[t]{.4\textwidth} \vspace{-.8cm} \begin{align*}
& S_{t} =  S_{t-1} + \sum_{i = 1} ^ {n_t} \frac{\varphi(x_{t,i}) }{ q_{t-1}( x_{t,i}) }\\
& N_t = N_{t-1} + n_t\\
& I_t = N _t^{-1}  S_{t} 
\end{align*}
\end{minipage}
\item Update the sampler $ q_{t}$
\end{enumerate}
\end{enumerate} 
\hrule
\end{minipage}
\end{algorithm}

Pioneer works on adaptive schemes include \cite{kloek+v:1978} where, within a two-stages procedure, the sampling policy is chosen out of a parametric family; this is further formalized in \cite{geweke:1989}; 
\cite{ho+b:1992} introduces the idea of a multi-stages approach where all the previous stages are used to update 
the sampling policy (see also \cite{richard+z:2007} regarding the choice of the loss function); 
\cite{owen+z:2000} investigates the use of control variates coupled with importance sampling; 
the \textit{population Monte Carlo} approach \cite{cappe+g+m+r:2004,cappe+d+g+m+r:2008} offers a general framework for AIS 
and has been further studied using parametric mixtures \cite{douc+g+m+r:2007a,douc+g+m+r:2007b};
see also \cite{cornuet+m+m+r:2012,veach+g:1995} for a variant called \textit{multiple adaptive importance sampling};
see \cite{elvira+m+l+b:2015} for a recent review.
In \cite{zhang:1996,neddermeyer:2009}, using kernel smoothing, \textit{nonparametric importance sampling} is introduced. The approach of choosing $q_t$ out of a parametric family should also be contrasted with the non parametric approach based on particles often refereed to as \textit{sequential Monte Carlo} \cite{delmoral+d+j:2006,chopin:2004,douc+m:2008} whose context is different as traditionally the targeted distribution changes with $t$. The distribution $q_{t-1}$ is then a weighted sum of Dirac masses $ \sum_i  w_{t-1,i} \delta_{x_{t-1,i}}$, and updating $q_t$ 
follows from adjustment of the weights.

The theoretical properties of adaptive schemes are difficult to derive due to the recycling of the past samples at each stage and hence to the lack of independence between samples. 
Among the update based on a parametric family, the convergence properties of the Kullback-Leibler divergence 
between the estimated and the targeted distribution are studied in \cite{douc+g+m+r:2007a}. 
 Properties related to the asymptotic variance are given in \cite{douc+g+m+r:2007b}.  Among nonparametric update, \cite{zhang:1996} establishes fast convergence rates in a two-stages strategy where the number of samples used in each stage goes to infinity. {For sequential Monte Carlo, limit theorems are given for instance in \cite{delmoral+d+j:2006,chopin:2004,douc+m:2008}. }
 {All these results are obtained when $T$ is fixed and $n_T\to \infty$ and therefore misses the true nature of the adaptive schemes for which the asymptotic should be made with respect to $T$. }
 
Recently, a more realistic asymptotic regime was considered in \cite{marin+p+s:2012} in which the allocation policy $(n_t)$ is a fixed growing sequence of integers. 
The authors establish the consistency of the estimate when the update 
is conducted with respect to a parametric family but depends \textit{only} on the last stage. They focus on multiple adaptive importance sampling \cite{cornuet+m+m+r:2012,veach+g:1995} which is different than AIS (see Remark \ref{rk:multiple} below for more details).

In this paper, folllowing the same spirit as \cite{douc+g+m+r:2007a,douc+g+m+r:2007b,cappe+d+g+m+r:2008}, we study \textit{parametric} AIS as presented in the AIS algorithm when the policy is chosen out of a parametric family of probability density functions. 
Our analysis focuses on the following $3$ key points which are new to the best of our knowledge.
\begin{itemize}
\item A central limit theorem is established for the AIS estimate $I_t$. It involves high-level conditions on the sampling policy estimate $q_t$ (which will be easily satisfied for parametric updates). Based on the martingale property associated to some sequences of interest, the asymptotic is not with $T$ fixed and $n_T\to\infty$, 
but with the number of samples $n_1+\dots + n_T\to\infty$. In particular, 
the allocation policy $(n_t)$ is not required to grow to infinity. This is presented in section \ref{sec:eff_AIS}.
\item The high-level conditions are verified in the case of parametric sampling policies with updates taking place in a general framework inspired by the paradigm of empirical risk minimization (several concrete examples are provided). This establishes the asymptotic optimality of AIS in the sense that the rate and the asymptotic variance coincide with some ``oracle'' procedure 
where the targeted policy is known from the beginning.
The details are given in section \ref{sec:consistency_samp_pol}.
\item A new method, called weighted AIS (wAIS) is designed in section \ref{sec:weightedAIS} to eventually forget bad samples drawn during the early stages of AIS. Our numerical experiments shows that (i) wAIS accelerates significantly the convergence of AIS and (ii) small allocation policies $(n_t)$ (implying more frequent updates) give better results than large $(n_t)$ (at equal number of requests to $\varphi$). This last point supports empirically the theoretical framework adopted in the paper.
\end{itemize}

All the proofs are given in the supplementary material.

\section{Central limit theorems for AIS}\label{sec:eff_AIS}

For the sake of generality and because it will be useful in the treatment of normalized estimators, we consider the multivariate case where $\varphi = (\varphi_1,\ldots \varphi_p) : \mathbb R^d \to \mathbb R^p$. In the whole paper, $\int \varphi $ is with respect to the Lebesgue measure, $\|\cdot\|$ is the Euclidean norm.

To study the AIS algorithm, it is appropriate to work at the sample time scale as described below rather than at the sampling policy scale as described in the introduction. The sample $x_{t,i}$ (resp. the policy $q_t$) of the previous section ($t$ is the block index and $i$ the sample index within the block) is now simply denoted $x_j$ (resp. $q_j$), where $j=n_1+\dots n_t+i$ 
is the sample index in the whole sequence $1,\ldots n  $, with $n=N_T$. The following algorithm is the same as Algorithm \ref{algo:pop_monte_carlo} (no explicit update rule is provided) but is expressed at the sample scale.

\begin{algorithm}[AIS at sample scale]\label{algo:pop_monte_carlobis} ~ \\ 
\begin{minipage}{13cm}
\textbf{Inputs}: The number of stages $T \in \mathbb N^*$, the allocation policy $(n_t)_{t=1,\ldots T}\subset \mathbb N^*$, the sampler update procedure, the initial density $q_0$.
\medskip\hrule\medskip
Set $  S_0 = 0$. For $j$ in $1,\ldots n$ :
\begin{enumerate}[(i)]
\item (Explore) Generate $x_j$ from $ q_{j-1}$
\item (Exploit) 
\begin{enumerate}[(a)]
\item \begin{minipage}[t]{.4\textwidth} Update the estimate: \\ \end{minipage}\begin{minipage}[t]{.4\textwidth} \vspace{-.8cm}\begin{align*}
& S_j =  S_{j-1} + \frac{\varphi(x_j)}{ q_{j-1}(x_j)}\\
& I_j = j^{-1}  S_j
\end{align*}
\end{minipage}
\item Update the sampler $ q_j$ whenever  $j\in \{ N_t = \sum_{s=1}^tn_s: t\ge 1\}$
\end{enumerate}
\end{enumerate}
\hrule
\end{minipage}
\end{algorithm}

\subsection{The martingale property}

Define $\Delta_j$ as the $j$-th centered contribution to the sum $ S_j$: $\Delta_j= {\varphi (x_j)} / { q_{j-1} (x_j) }-\int \varphi $. Define, for all $n\geq 1$,
\begin{align*}
 M_n = \sum_{j=1}^{n}\Delta_j.
\end{align*}
The filtration we consider is given by  $\mathscr F_{n}=\sigma(x_1,\ldots x_n)$.  The quadratic variation of $ M$ is given by $\langle M\rangle_n=\sum_{j=1}^n\mathbb E\big[\Delta_j\Delta_j^T\,|\,\mathscr F_{j-1}\big]$. Set 
\begin{align}\label{vari}
 V (q,\varphi) = \int\frac{\left(\varphi(x)- q(x) \int \varphi  \right)\left(\varphi(x)- q(x) \int \varphi  \right)^T }{ q(x)}dx.
\end{align}

\begin{lemma}\label{prop:mg}
Assume that for all $1\leq j\leq n$, the support of $ q_j$ contains the support of $\varphi$, 
then the sequence $( M_n,\mathscr F_n)$ is a martingale. 
In particular, $ I_n $ is an unbiased estimate of $ \int \varphi  $. In addition, the quadratic variation of $ M$ satisfies
$\langle M\rangle_n =\sum_{j=1}^n V ( q_{j-1},\varphi)$.
\end{lemma}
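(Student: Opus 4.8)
The plan is to reduce the whole statement to the single increment identity $\mathbb E[\Delta_j \mid \mathscr F_{j-1}] = 0$, from which the martingale property, the unbiasedness of $I_n$, and the quadratic-variation formula all follow by the same device. The essential structural observation is that $q_{j-1}$ is built only from the samples $x_1,\ldots,x_{j-1}$ produced during the previous stages, so it is $\mathscr F_{j-1}$-measurable; conditionally on $\mathscr F_{j-1}$ the density $q_{j-1}$ is therefore frozen and $x_j$ is drawn exactly from it. Computing the conditional expectation of any function of $x_j$ thus amounts to integrating that function against $q_{j-1}$, which is the only probabilistic input the proof needs.

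First I would settle integrability. Conditionally on $\mathscr F_{j-1}$,
$$\mathbb E\!\left[\left\|\frac{\varphi(x_j)}{q_{j-1}(x_j)}\right\| \,\Big|\, \mathscr F_{j-1}\right] = \int \frac{\|\varphi(x)\|}{q_{j-1}(x)}\, q_{j-1}(x)\, dx = \int \|\varphi(x)\|\, dx,$$
which is finite because $\int\varphi$ is assumed to exist. The support hypothesis is exactly what legitimizes this manipulation: wherever $\varphi(x)\neq 0$ one has $q_{j-1}(x)>0$, so the ratio $\varphi/q_{j-1}$ is defined $q_{j-1}$-almost everywhere and the cancellation $(\varphi/q_{j-1})\,q_{j-1}=\varphi$ holds on the support of $\varphi$, with no spurious contribution from the region $\{\varphi=0\}$. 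The very same change of measure then gives $\mathbb E[\varphi(x_j)/q_{j-1}(x_j)\mid\mathscr F_{j-1}]=\int\varphi$, hence $\mathbb E[\Delta_j\mid\mathscr F_{j-1}]=0$, and summing yields $\mathbb E[M_n\mid\mathscr F_{n-1}]=M_{n-1}$, the martingale property. Unbiasedness is then immediate from $M_n = S_n - n\int\varphi$: taking expectations, $\mathbb E[I_n]=n^{-1}\mathbb E[S_n]=n^{-1}(\mathbb E[M_n]+n\int\varphi)=\int\varphi$ since $\mathbb E[M_n]=\mathbb E[M_0]=0$.

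Finally, for the quadratic variation I would compute $\mathbb E[\Delta_j\Delta_j^T\mid\mathscr F_{j-1}]$ by the same integration against $q_{j-1}$. Rewriting $\varphi/q_{j-1}-\int\varphi = (\varphi - q_{j-1}\int\varphi)/q_{j-1}$ and forming the outer product, the factor $q_{j-1}$ coming from the conditional law cancels exactly one power in the denominator, so that
$$\mathbb E[\Delta_j \Delta_j^T \mid \mathscr F_{j-1}] = \int \frac{(\varphi(x) - q_{j-1}(x)\int\varphi)(\varphi(x) - q_{j-1}(x)\int\varphi)^T}{q_{j-1}(x)}\, dx = V(q_{j-1}, \varphi),$$
and summing over $j$ gives $\langle M\rangle_n = \sum_{j=1}^n V(q_{j-1},\varphi)$. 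I do not expect a serious obstacle here, since the argument is entirely a disintegration/change-of-measure computation; the only points requiring genuine care are the measure-theoretic bookkeeping around the support condition—handling the ratio $\varphi/q_{j-1}$ correctly where $q_{j-1}$ may vanish off the support of $\varphi$—and confirming $\int\|\varphi\|<\infty$, which guarantees that the conditional first moments are well-defined so that the increment identity, and in turn the stated formula for $\langle M\rangle_n$, make sense.
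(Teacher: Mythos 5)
Your proof is correct and follows essentially the same route as the paper's: conditioning on $\mathscr F_{j-1}$ freezes $q_{j-1}$, so the conditional law of $x_j$ turns each conditional expectation into an integral against $q_{j-1}$, giving $\mathbb E[\Delta_j\mid\mathscr F_{j-1}]=0$ and $\mathbb E[\Delta_j\Delta_j^T\mid\mathscr F_{j-1}]=V(q_{j-1},\varphi)$. The paper states these two facts in two lines; you merely add the (welcome but routine) integrability check and the careful handling of the ratio $\varphi/q_{j-1}$ off the support, which the support hypothesis indeed justifies.
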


\subsection{A central limit theorem for AIS}

The following theorem describes the asymptotic behavior of AIS. The conditions will be verified for parametric updates in section \ref{sec:consistency_samp_pol} (see Theorem \ref{thm:final_th}). 

\begin{theorem}[central limit theorem for AIS]\label{clt}
Assume that the sequence $q_n$ satisfies
\begin{align}\label{vcond}
&V( q_n,\varphi) \to V_*,\qquad \text{a.s.}
\end{align}
for some $V_*\geq 0$ and that there exists $\eta >0$ such that
\begin{align}\label{cond:lindeberg}
&\sup_{j \in \mathbb N} \int \frac{\|\varphi\|^{2+\eta}}{ q_{j}^{1+\eta}}<\infty,\qquad \text{a.s.}
\end{align}
Then we have
\begin{align*}
\sqrt n \,\Big( I_n-\int \varphi \Big)\overset{\diff}{\to} \mathcal N(0,V_*).
\end{align*}
\end{theorem}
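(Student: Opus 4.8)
The plan is to recognize that the quantity of interest is, up to normalization, the martingale $M_n$ from Lemma~\ref{prop:mg}, and then to invoke a martingale central limit theorem. Indeed, since $I_n-\int\varphi = n^{-1}S_n - \int\varphi = n^{-1}\sum_{j=1}^n\big(\varphi(x_j)/q_{j-1}(x_j)-\int\varphi\big) = n^{-1}M_n$, the claim is equivalent to $n^{-1/2}M_n \overset{\diff}{\to}\mathcal N(0,V_*)$. I would view $\{n^{-1/2}\Delta_j : 1\le j\le n\}$ as a triangular array of martingale differences with respect to the filtration $\mathscr F_{j-1}$ (note that each $\Delta_j$ itself does not depend on $n$, so the array is nested) and check the two standard hypotheses of the martingale CLT, as in Hall and Heyde: convergence in probability of the normalized conditional variance to the deterministic limit $V_*$, and a conditional Lindeberg condition.

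For the conditional variance, Lemma~\ref{prop:mg} gives $\langle M\rangle_n = \sum_{j=1}^n V(q_{j-1},\varphi)$, so the normalized conditional variance equals $n^{-1}\langle M\rangle_n = n^{-1}\sum_{j=1}^n V(q_{j-1},\varphi)$. Since hypothesis (\ref{vcond}) states that $V(q_n,\varphi)\to V_*$ almost surely, Ces\`aro's lemma yields $n^{-1}\langle M\rangle_n\to V_*$ almost surely, hence in probability. Because $V_*$ is deterministic, this rules out any mixing in the limit and points to ordinary (rather than mixed) Gaussian convergence.

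For the Lindeberg part I would establish a Lyapunov-type bound. The key computation uses that, conditionally on $\mathscr F_{j-1}$, the point $x_j$ is drawn from $q_{j-1}$, so that $\mathbb E\big[\|\varphi(x_j)/q_{j-1}(x_j)\|^{2+\eta}\,\big|\,\mathscr F_{j-1}\big] = \int \|\varphi\|^{2+\eta}/q_{j-1}^{1+\eta}$. By hypothesis (\ref{cond:lindeberg}) this is bounded, uniformly in $j$, by a finite (almost surely) constant; combined with the $c_r$-inequality to absorb the deterministic shift by $\int\varphi$, this gives $\sup_j \mathbb E\big[\|\Delta_j\|^{2+\eta}\,\big|\,\mathscr F_{j-1}\big] \le C < \infty$ almost surely. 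Consequently $n^{-(1+\eta/2)}\sum_{j=1}^n \mathbb E\big[\|\Delta_j\|^{2+\eta}\,\big|\,\mathscr F_{j-1}\big] \le C\, n^{-\eta/2}\to 0$, which is the Lyapunov condition and in particular implies the conditional Lindeberg condition for the array $\{n^{-1/2}\Delta_j\}$.

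Having verified both hypotheses, the multivariate conclusion follows; to avoid invoking a matrix-valued CLT directly I would reduce to the scalar case by the Cram\'er--Wold device, applying the one-dimensional martingale CLT to $u^TM_n$ for an arbitrary fixed $u\in\mathbb R^p$ (its normalized conditional variance converges to $u^TV_*u$ and the Lindeberg bound is inherited). The main obstacle I anticipate is not any single step but the careful handling of the almost sure nature of the two hypotheses: the sequence $(q_j)$ is itself random, so both the uniform moment bound and the Ces\`aro limit hold only on a probability-one event, and one must confirm that almost sure control of these conditional quantities suffices to feed the (in-probability) hypotheses of the martingale CLT — which it does, since almost sure convergence implies convergence in probability.
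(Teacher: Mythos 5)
Your proposal is correct and follows the same skeleton as the paper's proof: reduce to the scalar case by Cram\'er--Wold, identify $\sqrt n\,(I_n-\int\varphi)$ with $n^{-1/2}M_n$, invoke the martingale CLT of Hall and Heyde, and handle the conditional variance via Lemma~\ref{prop:mg} together with Ces\`aro's lemma applied to hypothesis~(\ref{vcond}).

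The one place where you genuinely diverge is the Lindeberg step. The paper verifies the conditional Lindeberg condition directly: it truncates, bounds $\mathbb E[\Delta_j^2 1_{|\Delta_j|>\varepsilon\sqrt n}\,|\,\mathscr F_{j-1}]$ by splitting off the constant $I=\int\varphi$, and applies Markov's inequality twice, producing terms of order $(\varepsilon\sqrt n-|I|)^{-\eta}$ and $(\varepsilon\sqrt n-|I|)^{-1}$ (the latter multiplied by $\int|\varphi|$, which is finite by Jensen's inequality under~(\ref{cond:lindeberg})). You instead verify the stronger conditional Lyapunov condition: the $c_r$-inequality plus~(\ref{cond:lindeberg}) gives $\sup_j\mathbb E[\|\Delta_j\|^{2+\eta}\,|\,\mathscr F_{j-1}]\le C(\omega)<\infty$ a.s., whence $n^{-(1+\eta/2)}\sum_j\mathbb E[\|\Delta_j\|^{2+\eta}\,|\,\mathscr F_{j-1}]\le C(\omega)n^{-\eta/2}\to 0$, and Lyapunov implies Lindeberg by the elementary bound $\mathbb E[X^2 1_{|X|>\varepsilon}]\le\varepsilon^{-\eta}\mathbb E[|X|^{2+\eta}]$. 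Your route is shorter and arguably cleaner, since hypothesis~(\ref{cond:lindeberg}) is precisely a uniform $(2+\eta)$-moment bound and so is tailor-made for Lyapunov; the paper's route is more self-contained in that it lands exactly on the condition cited from Hall--Heyde without passing through the Lyapunov implication. Your closing remark is also the right one: since the bounding constant $C(\omega)$ and the Ces\`aro limit are only almost-sure, one needs (as you note) that a.s. convergence implies convergence in probability, which is what the martingale CLT hypotheses require.
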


\begin{remark}[zero-variance estimate]
Suppose that $p=1$ (recalling that $\varphi : \mathbb R ^d \to \mathbb R^p$). Theorem \ref{clt} includes the degenerate case $V_*= 0$. 
This happens when the integrand has constant sign 
and the sampling policy is well chosen, i.e. $ q_n \to |\varphi| /\int|\varphi|$. In this case,
we have that $ \sqrt n (I_n-\int \varphi ) = o_p(1)$, meaning that the standard Monte Carlo convergence rate ($1/\sqrt n $) has been improved. This is inline with the results presented in \cite{zhang:1996} where fast rates of convergence (compared to standard Monte Carlo) are obtained under restrictive conditions on the allocation policy $(n_t)$. Note that other techniques such as \textit{control variates}, \textit{kernel smoothing} or \textit{Gaussian quadrature} can achieve fast convergence rates \cite{oates:2016,portier+s:2018,bardenet+h:2016,delyon+p:2016}. 
\end{remark}

\begin{remark}[adaptive multiple importance sampling]\label{rk:multiple}
Another way to compute the importance weights, called multiple adaptive importance 
sampling, has been introduced in \cite{veach+g:1995} and has been successfully used in \cite{owen+z:2000,cornuet+m+m+r:2012}. 
This consists in replacing $ q_{j-1}$ in the computation of $S_j$ by $\bar q_{j-1}=\sum_{i=1}^j q_{i-1}/j$, 
$x_j$ still being drawn under $ q_{j-1}$.
The intuition is that this averaging will reduce the effect of exceptional points $x_j$ for which
$|\varphi(x_j)|\gg q_{j-1}( x_j)$ (but $|\varphi(x_j)|\not\!\gg\bar q_{j-1}( x_j)$). Our approach is not able to study this variant, 
simply because the martingale property described previously is not anymore satisfied.
\end{remark}

\subsection{Normalized AIS}\label{sec:norm_ais}

The normalization technique described in (\ref{eq:normalization}) is designed to compute $\int \varphi \pi$, where $\pi$ is a density. It is useful in the Bayesian context where $\pi$ is only known up to a constant. As this technique seems to provide substantial improvements compared to unnormalized estimates (i.e., (\ref{eq:unnormalization}) with $\varphi $ replaced by $\varphi \pi$), we recommend to use it even when the normalized constant of $\pi$ is known. Normalized estimators are given by
\begin{align*}
&I ^{(\text{norm})}_n =   \frac{ I_n(\varphi \pi) }{  I_n(\pi)},\qquad\text{with}\quad  I_n(\psi ) = n^{-1} \sum_{j=1}^ n {\psi(x_{j}) }/{ q_{j-1}(x_{j})}.
\end{align*}
Interestingly, normalized estimators are weighted least-squares estimates as they minimize the function $a\mapsto \sum_{j=1}^n ({\pi(x_{j})}/{ q_{j-1}(x_{j})}) (\varphi(x_{j}) - a)^2$. In contrast with $I_n$, $I ^{(\text{norm})}_n $ has the following shift-invariance property : whenever $\varphi $ is shifted by $\mu$, $I ^{(\text{norm})}_n $ simply becomes $I ^{(\text{norm})}_n + \mu$. 
Because $I_n(\psi )$ is of the same kind as $I_n$ defined in the second AIS algorithm, a straightforward application of Theorem \ref{clt} (with $(\varphi^T \pi,\pi)^T$ in place of $\varphi$) coupled with the delta-method \cite[chapter 3]{vandervaart:1998} permits to obtain the following result.

\begin{corollary}[central limit theorem for normalized AIS]\label{cor:normalized}
Suppose that (\ref{vcond}) and (\ref{cond:lindeberg}) hold with $(\varphi^T \pi,\pi)^T$ (in place of $\varphi$). Then we have 
\begin{align*}
\sqrt n \Big(I ^{(\text{norm})}_n - \int \varphi \pi\Big) \overset{\diff } {\to}  \mathcal N (0, u^T  V_* u  ),
\end{align*}
with $u = (1,-\int \varphi^T \pi )^T$.
\end{corollary}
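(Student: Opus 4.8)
The plan is to realize $I_n^{(\mathrm{norm})}$ as a smooth transformation of a single vector-valued AIS estimate and then invoke Theorem \ref{clt} together with the delta-method, as anticipated in the text preceding the statement. First I would collect the two integrands into the $\mathbb R^{p+1}$-valued map $\Phi = (\varphi^T\pi,\pi)^T$ and observe that the pair $(I_n(\varphi\pi),I_n(\pi))$ is precisely the AIS estimate $I_n$ produced by Algorithm \ref{algo:pop_monte_carlobis} when run with $\Phi$ in place of $\varphi$, since each coordinate has the form $n^{-1}\sum_{j=1}^n \Phi(x_j)/q_{j-1}(x_j)$. Because $\pi$ is a probability density, $\int \Phi = ((\int\varphi\pi)^T,1)^T$, so the last coordinate of the target equals $1$. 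The corollary assumes that (\ref{vcond}) and (\ref{cond:lindeberg}) hold for $\Phi$, hence Theorem \ref{clt} applies verbatim and delivers the joint central limit theorem
\[
\sqrt n\left(\begin{pmatrix} I_n(\varphi\pi)\\ I_n(\pi)\end{pmatrix}-\begin{pmatrix}\int\varphi\pi\\ 1\end{pmatrix}\right)\overset{\diff}{\to}\mathcal N(0,V_*),
\]
where $V_*$ is the limiting quadratic-variation matrix of size $(p+1)\times(p+1)$.

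Next I would write $I_n^{(\mathrm{norm})}=g(I_n(\varphi\pi),I_n(\pi))$ with $g(a,b)=a/b$, defined and smooth on $\{b\neq 0\}$. The relevant evaluation point is $(\int\varphi\pi,1)$; here $g$ is differentiable precisely because the denominator equals $1$, and its gradient is $(\partial_a g,\partial_b g)\big|_{(\int\varphi\pi,1)}=(1,-\int\varphi\pi)=u^T$, with $u=(1,-\int\varphi^T\pi)^T$ (in the scalar-target case of the displayed variance; for vector-valued $\varphi$ the identical scheme applies with $u$ replaced by the corresponding Jacobian matrix). Applying the delta-method \cite[chapter 3]{vandervaart:1998} to transport the joint limit through $g$ then yields
\[
\sqrt n\Big(I_n^{(\mathrm{norm})}-\int\varphi\pi\Big)\overset{\diff}{\to}\mathcal N(0,u^T V_* u),
\]
which is the claim.

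Once Theorem \ref{clt} is granted the argument is essentially bookkeeping, so there is no deep obstacle; the two points deserving attention are that the high-level assumptions genuinely transfer to the stacked integrand $\Phi$ (assumed here, but in applications one must control $V(q_n,\Phi)$ and the Lindeberg bound for $\pi$ and $\varphi\pi$ jointly) and that the delta-method is legitimately applied at a point of differentiability, guaranteed by $\int\pi=1\neq 0$ together with the consistency of $I_n(\pi)$ for $1$ that is built into the CLT. The only genuine computation is the gradient of $g$. A clean sanity check is that $u^T\Phi=(\varphi-\int\varphi\pi)\pi$ while $u^T\int\Phi=0$, whence for every policy $q$ one has $u^T V(q,\Phi)\,u=\int (\varphi-\int\varphi\pi)^2\pi^2/q$; passing to the limit, $u^T V_* u$ is manifestly invariant when $\varphi$ is shifted by a constant, matching the shift-invariance of $I_n^{(\mathrm{norm})}$ recorded above, and it is minimized by the policy $q\propto|\varphi-\int\varphi\pi|\pi$ of Remark \ref{rk:opt_pol_norm}.
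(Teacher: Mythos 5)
Your proposal is correct and follows exactly the route the paper itself takes: the paper's entire justification is the sentence preceding the corollary, namely applying Theorem \ref{clt} to the stacked integrand $(\varphi^T\pi,\pi)^T$ and then the delta-method, which is precisely what you carry out (with the correct gradient $u^T=(1,-\int\varphi\pi)$ of $(a,b)\mapsto a/b$ at $(\int\varphi\pi,1)$). Your added remarks on the vector-valued case and the shift-invariance sanity check are accurate but not needed beyond what the paper asserts.
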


\section{Parametric sampling policy}\label{sec:consistency_samp_pol}

From this point forward, the sampling policies $q_t$, $t=1,\ldots T$ (we are back again to the sampling policy scale as in Algorithm \ref{algo:pop_monte_carlo}), are chosen out of a parametric family of probability density functions $\{q_\theta\,:\, \theta\in \Theta\}$. All our examples fit the general framework of empirical risk minimization over the parameter space $\Theta\subset \mathbb R^q$, where $ \theta_t$ is given by
\begin{align}
& \theta_t \in \argmin_{\theta\in \Theta} \,  R_{t}(\theta),\label{eq:min_risk}\\
\nonumber& R_{t}(\theta) = \sum_{s=1}^t \sum_{i=1}^{n_s} \frac{m_\theta(x_{s,i} )}{ q_{s-1}(x_{s,i})},
\end{align}
where $ q_{s}$ is a shortcut for $q_{\theta_s}$, $m_\theta : \mathbb R^d\to \mathbb R $ might be understood as a loss function (see the next section for examples). Note that $R_t/N_t$ is an unbiased estimate of the risk 
$r(\theta) = \int m_{\theta}  $.

\subsection{Examples of sampling policy}\label{subsection:examples}
We start by introducing a particular case, which is one of the simplest way to implement AIS. Then we will provide more general approaches. In what follows, the targeted policy, denoted by $f$, is chosen by the user and represents the distribution from which we wish to sample. It often reflects some prior knowledge on the problem of interest. If $\varphi : \mathbb R^d \to \mathbb R^p$, with $p=1$, then (as discussed in the introduction) $f\propto |\varphi| $ is optimal for (\ref{eq:unnormalization}) and $f \propto |\varphi - \int \varphi \pi | \pi $ is optimal for (\ref{eq:normalization}). In the Bayesian context where many integrals $\int (\varphi_1,\ldots \varphi_p) d\pi$ need to be computed, a usual choice is $f = \pi$. All the following methods only require calls to an unnormalized version of $f$.

\paragraph{Exact method of moments with Student distributions.} In this case $(q_\theta)_{\theta\in\Theta}$ is just
the family of multivariate Student distributions with $\nu>2$ degrees of freedom (fixed parameter). 
The parameter $\theta$ contains a location and a scale parameter $\mu$ and $\Sigma$. This family has two advantages:
the parameter $\nu$ allows tuning for heavy tails, and estimation is easy 
because moments of $q_\theta$ are explicitly related to $\theta$. A simple unbiased estimate for $\mu$ is $ (1/ {N_t} ) \sum_{s=1}^t \sum_{i=1}^{n_s} x_{s,i} {f(x_{s,i} )} / { q_{s-1}(x_{s,i} )}$,
but, as mentioned in section \ref{sec:norm_ais}, we prefer to use the normalized estimate  (using the shortcut $ q_{s}$ for $q_{\theta_s}$):
\begin{align}\label{muhat}
&\mu_t
= {\sum_{s=1}^t \sum_{i=1}^{n_s} x_{s,i} \frac{f(x_{s,i} )} { q_{s-1}(x_{s,i} )}} \left/  {\sum_{s=1}^t \sum_{i=1}^{n_s}  \frac{f(x_{s,i} )} { q_{s-1}(x_{s,i} )}} \right. ,\\
& \Sigma_t
= \left(\frac{\nu-2 }{\nu}\right) {\sum_{s=1}^t\sum_{i=1}^{n_s} (x_{s,i} - \mu_t )(x_{s,i} - \mu_t )^T \frac{f(x_{s,i} )}  { q_{s-1}(x_{s,i} )}} \left/  {\sum_{s=1}^t \sum_{i=1}^{n_s}  \frac{f(x_{s,i} )} { q_{s-1}(x_{s,i} )}}\right. .
\label{sigmahat}
\end{align}

\paragraph{Generalized method of moments (GMM).} This approach includes the previous example. The policy is chosen according to a moment matching condition, i.e., $\int g q_\theta = \int g f$ for some function $g:\mathbb R^d \to \mathbb R^D $. For instance, $g$ might be given by $x\mapsto x$ or $x\mapsto xx^T$ (both are considered in the Student case). Following \cite{hansen:1982}, choosing $\theta$ such that the empirical moments of $g$ coincide with $\int g q_\theta$ might be impossible. We rather compute $ \theta_t $ as the minimum of
\begin{align*}
 \left\| \mathbb E_\theta (g) -  \left( {\sum_{s=1}^t \sum_{i=1}^{n_s} g(x_{s,i}) \frac{f(x_{s,i} )} { q_{s-1}(x_{s,i} )}} \left/ { \sum_{s=1}^t \sum_{i=1}^{n_s} \frac{f(x_{s,i} )} { q_{s-1}(x_{s,i} ) }}  \right. \right) \right\|^2.
\end{align*}
Equivalently,
\begin{align*}
 \theta_t \in \argmin_{\theta \in \Theta} \, \sum_{s=1}^t\sum_{i = 1}^{n_s} \left\| \mathbb E_\theta (g) -  g(x_{s,i})\right\|^2  \frac{f(x_{s,i} )} { q_{s-1}(x_{s,i} ) }  ,
\end{align*}
which embraces the form given by \eqref{eq:min_risk}, with $ m_\theta =  \|\mathbb E_\theta (g) -  g \|^2 f$.

\paragraph{Kullback-Leibler approach.} Following \cite[section 5.5]{vandervaart:1998}, define the Kullback-Leibler risk as
$r(\theta) = - \int  \log( q_\theta ) f  $.
Update of $\theta_t$ is done by minimizing the current estimator of $N_t r(\theta)$ given by
\begin{align}
&  R_{t}(\theta)= R_{t-1}(\theta)-\sum_{i=1}^{n_t}\frac{\log( q_\theta (x_{t,i})) f (x_{t,i})}{ q_{t-1}(x_{t,i})}.\label{eq:risk_update_kl}
\end{align}

\paragraph{Variance approach.} Another approach, when $\varphi : \mathbb R^d \to \mathbb R^p$ with $p=1$, consists in minimizing the variance over the class of sampling policies. In this case, define
$r(\theta) = \int   {\varphi^2}/ {q_\theta}   $, and follow a similar approach as before by minimizing at each stage,
\begin{align}\label{eq:risk_update_var}
&  R_{t}(\theta)= R_{t-1}(\theta)+\sum_{i=1}^{n_t}\frac{\varphi(x_{t,i})^2  }{ q_{\theta}(x_{t,i})  q_{t-1}(x_{t,i})}.
\end{align}
This case represents a different situation than the Kullback-Leibler approach and the GMM. Here, the sampling policy is selected optimally with respect to a particular function $\varphi$ whereas for KL and GMM the sampling policy is driven by a targeted distribution $f$.

\begin{remark}[computation cost]
The update rule (\ref{eq:min_risk}) might be computationally costly but alternatives exist. For instance, when $q_\theta$ is a family of Gaussian distributions, closed formulas are available for (\ref{eq:risk_update_var}). In fact we are in the case of weighted maximum likelihood estimation for which we find
exactly (\ref{muhat}) and (\ref{sigmahat}), with $\nu = \infty$.
This is computed online at no cost. Another strategy to reduce the computation time is to use online stochastic gradient descent in (\ref{eq:min_risk}).
\end{remark}

\begin{remark}[block estimator]
In \cite{marin+p+s:2012}, the authors suggest to update $ \theta$ based only on the particles from the last stage. For the Kullback-Leibler update, (\ref{eq:risk_update_kl}) would be replaced by $ R_{t}(\theta) = - \sum_{i = 1}^{n_t} {\log(q_\theta(x_{t,i})) f(x_{t,i})} / { q_{t-1}( x_{t,i} ) }$.
While this update makes easier the theoretical analysis (assuming that $n_t \to \infty$), its main drawback is that most of the computing effort is forgotten at each stage as the previous computations are not used.
\end{remark}

%

\subsection{Consistency of the sampling policy and asymptotic optimality of AIS}\label{consit_sampling_policy}

The updates described before using GMM, the Kullback-Leibler divergence or the variance, all fit within the framework of empirical risk minimization, given by (\ref{eq:min_risk}), which rewritten at the sample scale gives
\begin{align*}
&  R_j(\theta) =  R_{j-1}(\theta) + \frac{m_\theta(x_j)}{ q_{j-1}( x_j) } \\
&- \text{ if } j\in \{ N_t: t\ge 1\} \text{ then } : ~~~~~~~~~~~~~ \theta_j \in \argmin_{\theta\in \Theta} \,  R_j(\theta)\\
&~~~~~~~~~~~~~~~~~~~~~~~~~~~~~~~~~~~~~~~~~~~~~~~~~~~~~~~~~~~~~~ q_j= q_{ \theta_j}\\
&- \text { else :} ~~~~~~~~~~~~~~~~~~~~~~~~~~~~~~~~~~~~~~~~~~~~~~~~ q_j= q_{j-1}.
\end{align*}
The proof follows from a standard approach from $M$-estimation theory \cite[Theorem 5.7]{vandervaart:1998} but a particular attention shall be payed to the uniform law of large numbers because of the missing i.i.d. property of the sequences of interest.

\begin{theorem}[concistency of the sampling policy]\label{theorem:convergence_theta}
Set $M(x) =\sup_{\theta\in \Theta} m_\theta(x)$. Assume that $\Theta\subset \mathbb R^q $ is a compact set and that
\begin{align}\label{Mhyp}
&\int M(x) d x<\infty , \quad \sup_{\theta\in\Theta} \int \frac{M(x)^2}{q_{\theta}(x)}dx<\infty,\quad\text{and } \quad \forall\theta\ne\theta_*,~~ r(\theta) = \int m_\theta > \int m_{\theta_*}.
\end{align}
If moreover, for any $x\in \mathbb R^d $, the function $ \theta\mapsto m_{\theta}(x)$ is continuous on $\mathbb R^q$, then
\begin{align*}
 \theta_n \to \theta_*,\qquad \text{a.s.}
\end{align*}
\end{theorem}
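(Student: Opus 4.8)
The plan is to cast the argument in the classical $M$-estimation framework of \cite[Theorem 5.7]{vandervaart:1998}: once we have (a) the uniform law of large numbers $\sup_{\theta\in\Theta}|R_n(\theta)/n-r(\theta)|\to 0$ a.s.\ and (b) the well-separation of the minimizer, the conclusion is routine. Well-separation --- namely $\inf_{\|\theta-\theta_*\|\geq\varepsilon}r(\theta)>r(\theta_*)$ for every $\varepsilon>0$ --- follows from the uniqueness assumption in \eqref{Mhyp}, the compactness of $\Theta$, and the continuity of $r(\theta)=\int m_\theta$; the latter is itself obtained from the pointwise continuity of $\theta\mapsto m_\theta(x)$, the domination $|m_\theta|\leq M$, and dominated convergence. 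The real work is therefore the ULLN, which must be established \emph{without} the i.i.d.\ structure that usually underlies such results.

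The cornerstone is the following observation. Because $x_j$ is drawn from $q_{j-1}$, each increment of $R_n(\theta)$ has conditional mean exactly equal to $r(\theta)$,
\begin{align*}
\mathbb E\!\left[\frac{m_\theta(x_j)}{q_{j-1}(x_j)}\,\Big|\,\mathscr F_{j-1}\right]=\int m_\theta(x)\,dx=r(\theta),
\end{align*}
so that $R_n(\theta)-n\,r(\theta)$ is, for each fixed $\theta$, a martingale with respect to $(\mathscr F_n)$. Its conditional second moments are uniformly controlled: $\mathbb E[(m_\theta(x_j)/q_{j-1}(x_j))^2\,|\,\mathscr F_{j-1}]=\int m_\theta^2/q_{j-1}\leq \sup_{\phi}\int M^2/q_\phi<\infty$ by \eqref{Mhyp}. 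Fixing $\theta$ and applying the martingale strong law then gives the pointwise result: the series $\sum_j j^{-1}(m_\theta(x_j)/q_{j-1}(x_j)-r(\theta))$ has summable conditional variances, hence converges a.s., and Kronecker's lemma yields $R_n(\theta)/n\to r(\theta)$ a.s.

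To upgrade pointwise to uniform convergence I would use a bracketing argument that exploits only continuity, compactness and the integrable envelope $M$. Fix $\varepsilon>0$. By continuity of $\theta\mapsto m_\theta(x)$, domination by $M$ and dominated convergence, $\Theta$ can be covered by finitely many pieces $\Theta_1,\dots,\Theta_K$ on which the brackets $u_k=\sup_{\theta\in\Theta_k}m_\theta$ and $\ell_k=\inf_{\theta\in\Theta_k}m_\theta$ satisfy $\int(u_k-\ell_k)\,dx<\varepsilon$; since $|u_k|,|\ell_k|\leq M$, the pointwise law of large numbers above applies verbatim to $u_k$ and to $\ell_k$. For $\theta\in\Theta_k$ the sandwich
\begin{align*}
\frac1n\sum_{j=1}^n\frac{\ell_k(x_j)}{q_{j-1}(x_j)}\leq \frac{R_n(\theta)}{n}\leq \frac1n\sum_{j=1}^n\frac{u_k(x_j)}{q_{j-1}(x_j)},
\end{align*}
combined with $\int u_k< r(\theta)+\varepsilon$ and $\int\ell_k> r(\theta)-\varepsilon$, shows that $\limsup_n\sup_{\theta\in\Theta_k}|R_n(\theta)/n-r(\theta)|\leq\varepsilon$ a.s.; taking the maximum over the finitely many $k$ and then $\varepsilon\downarrow 0$ along a countable sequence yields the ULLN.

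Finally I would close the $M$-estimation argument. At each update time $n=N_t$ the estimate $\theta_n$ minimizes $R_n$, so $r(\theta_n)\leq R_n(\theta_n)/n+o(1)\leq R_n(\theta_*)/n+o(1)=r(\theta_*)+o(1)$ by the ULLN; together with $r(\theta_n)\geq r(\theta_*)$ this gives $r(\theta_n)\to r(\theta_*)$, and well-separation then forces $\theta_n\to\theta_*$ a.s. Because between consecutive update times $\theta_j$ is held constant equal to the last updated value, convergence along the subsequence $(N_t)$ propagates to the full sequence. The main obstacle is precisely the ULLN: the adaptive recycling of past samples destroys independence, and the argument hinges on recognizing that importance reweighting makes each increment of $R_n(\theta)-n\,r(\theta)$ a martingale increment with conditional mean exactly $r(\theta)$ and conditional variance bounded uniformly in $\theta$ and in the random history through $\sup_\theta\int M^2/q_\theta<\infty$. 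Everything else is a standard compactness/continuity wrapper.
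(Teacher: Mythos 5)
Your proof is correct, and its skeleton is the same as the paper's: both reduce Theorem \ref{theorem:convergence_theta} to van der Vaart's Theorem 5.7, both get well-separation from compactness plus continuity of $r(\theta)=\int m_\theta$ (dominated convergence with envelope $M$), and both identify the uniform law of large numbers as the real obstacle, to be overcome by exploiting the fact that importance reweighting makes each increment $m_\theta(x_j)/q_{j-1}(x_j)$ conditionally unbiased for $r(\theta)$ with conditional second moment bounded by $\sup_{\phi\in\Theta}\int M^2/q_\phi<\infty$. Where you genuinely diverge is in how the ULLN is implemented. For the pointwise law of large numbers, the paper proves an elementary lemma (Theorem \ref{general} in the supplement) for nonnegative summands with uncorrelated centered increments, using Chebyshev along the subsequence $n^2$ plus a monotonicity sandwich; this avoids martingale convergence theory entirely but forces a split of $m_\theta$ into positive and negative parts. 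You instead invoke the martingale strong law ($L^2$-bounded martingale $\sum_j D_j/j$ plus Kronecker's lemma), which is shorter and leans on a standard result. For the passage from pointwise to uniform, the paper proves a standalone ULLN (Theorem \ref{uln}) whose hypothesis is a pointwise LLN for the local oscillation variables $\sup_{\theta\in B}|H_i(\theta)-H_i(\theta_0)|$, while you run a Wald-type bracketing argument applying the pointwise LLN to the fixed envelope functions $u_k=\sup_{\Theta_k}m_\theta$ and $\ell_k=\inf_{\Theta_k}m_\theta$; these are essentially the same compactness/oscillation idea packaged differently, and both require the same measurability caveat for suprema, which the paper makes explicit as an assumption. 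Two small points you share with the paper rather than improve on: both treatments implicitly use $M$ as a bound on $|m_\theta|$ even though $M$ is defined as $\sup_\theta m_\theta$ (an upper envelope only), and both need the support condition (implied by $\sup_\theta\int M^2/q_\theta<\infty$) to ensure the conditional mean of each increment is exactly $r(\theta)$, a point you use but do not state. Your handling of the propagation from update times $N_t$ to the full sequence is actually slightly more explicit than the paper's, which leaves that step to the citation of van der Vaart.
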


The conclusion given in Theorem \ref{theorem:convergence_theta} permits to check the conditions of Theorem \ref{clt}. This leads to the following result.

\begin{theorem}[asymptotic optimality of AIS]\label{thm:final_th}
Under the assumptions of Theorem~\ref{theorem:convergence_theta},
if there exists $\eta >0$ such that $\sup_{\theta\in \Theta} \int {\|\varphi\|^{2+\eta}} / { q_{\theta}^{1+\eta}}<\infty$,
then, we have
\begin{align*}
\sqrt n \,( I_n -I )\overset{\diff } {\to} \mathcal N\big(0,V(q_{\theta_*},\varphi)\big),
\end{align*}
where $V(\cdot,\cdot)$ is defined in Equation~(\ref{vari}).
\end{theorem}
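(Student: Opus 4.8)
The plan is to deduce the statement directly from the central limit theorem for AIS (Theorem~\ref{clt}), feeding it the almost sure consistency $\theta_n \to \theta_*$ supplied by Theorem~\ref{theorem:convergence_theta}. Writing $q_n = q_{\theta_n}$ at the sample scale, it suffices to verify that the two high-level hypotheses \eqref{vcond} and \eqref{cond:lindeberg} of Theorem~\ref{clt} hold almost surely, with the identification $V_* = V(q_{\theta_*},\varphi)$; the conclusion is then immediate.

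First I would dispose of the Lindeberg-type condition \eqref{cond:lindeberg}. Since every policy $q_j$ lies in the family $\{q_\theta:\theta\in\Theta\}$, one has the deterministic domination $\sup_j \int \|\varphi\|^{2+\eta}/q_j^{1+\eta} \le \sup_{\theta\in\Theta}\int \|\varphi\|^{2+\eta}/q_\theta^{1+\eta}$, and the right-hand side is finite by assumption; hence \eqref{cond:lindeberg} holds with no extra work. Incidentally, a Hölder interpolation of $\|\varphi\|^2/q_\theta$ against the probability mass $\int q_\theta = 1$ (with exponents $(2+\eta)/2$ and $(2+\eta)/\eta$) shows $\sup_\theta \int \|\varphi\|^2/q_\theta<\infty$, so that $V(q_\theta,\varphi)$ is well defined and uniformly bounded on $\Theta$, and the limit $V(q_{\theta_*},\varphi)$ is finite.

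The substance of the proof is the convergence \eqref{vcond}, namely $V(q_{\theta_n},\varphi)\to V(q_{\theta_*},\varphi)$ almost surely. Expanding the definition \eqref{vari} and using $\int\varphi=I$ and $\int q_\theta=1$ gives $V(q_\theta,\varphi)=\int \varphi\varphi^T/q_\theta\,dx - II^T$, so the whole $\theta$-dependence is carried by $\Phi(\theta):=\int \varphi(x)\varphi(x)^T/q_\theta(x)\,dx$, and it is enough to establish that $\Phi$ is continuous at $\theta_*$. Given $\theta_n\to\theta_*$, continuity of $\theta\mapsto q_\theta(x)$ together with the positivity of $q_{\theta_*}$ on the support of $\varphi$ (implied by the support condition of Lemma~\ref{prop:mg}) yields the pointwise convergence $\varphi\varphi^T/q_{\theta_n}\to \varphi\varphi^T/q_{\theta_*}$ for almost every $x$; the strengthened moment bound is what allows passing to the limit under the integral.

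I expect this last passage to the limit to be the main obstacle, precisely because the integrals are taken against the (infinite) Lebesgue measure, so that pointwise convergence together with a mere uniform $L^{1+\eta/2}$ bound does not by itself force convergence of the integrals: mass could escape to infinity. The exponent $2+\eta$ is tailored to rule this out. For any measurable set $A$, Hölder's inequality gives $\int_A \|\varphi\|^2/q_\theta \le \big(\int_A \|\varphi\|^{2+\eta}/q_\theta^{1+\eta}\big)^{2/(2+\eta)}\big(\int_A q_\theta\big)^{\eta/(2+\eta)}$, which bounds the contributions of the tails and of small-mass level sets uniformly in $\theta\in\Theta$, furnishing the uniform integrability and tightness demanded by a generalized dominated convergence (Vitali) argument. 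Combined with the almost everywhere convergence of the integrands this yields $\Phi(\theta_n)\to\Phi(\theta_*)$, hence \eqref{vcond}. With both \eqref{vcond} and \eqref{cond:lindeberg} in force, Theorem~\ref{clt} applies and delivers $\sqrt n\,(I_n-I)\overset{\diff}{\to}\mathcal N\big(0,V(q_{\theta_*},\varphi)\big)$.
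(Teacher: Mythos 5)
Your proof is correct, and its skeleton is exactly that of the paper: verify the two hypotheses \eqref{vcond} and \eqref{cond:lindeberg} of Theorem~\ref{clt}, feed in the consistency $\theta_n\to\theta_*$ from Theorem~\ref{theorem:convergence_theta}, and conclude. The paper dismisses \eqref{cond:lindeberg} exactly as you do, since the uniform-in-$\theta$ moment bound trivially dominates the supremum over $j$. Where you genuinely diverge is in justifying the continuity of $\theta\mapsto V(q_\theta,\varphi)$: the paper disposes of it in one line by invoking condition \eqref{Mhyp}, the Lebesgue dominated convergence theorem, and the continuous mapping theorem, whereas you observe --- correctly --- that the hypotheses do not hand you a genuine dominating function over the infinite-measure Lebesgue space, and you replace DCT by a Vitali-type argument: writing $V(q_\theta,\varphi)=\int\varphi\varphi^T/q_\theta - II^T$ and using the (correct) H\"older bound $\int_A\|\varphi\|^2/q_\theta\le\bigl(\int_A\|\varphi\|^{2+\eta}/q_\theta^{1+\eta}\bigr)^{2/(2+\eta)}\bigl(\int_A q_\theta\bigr)^{\eta/(2+\eta)}$ to extract uniform integrability from the $(2+\eta)$-moment hypothesis. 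This is more careful than the paper's one-liner and arguably repairs its terseness. Two caveats. First, your H\"older bound alone does not make the tail and small-set contributions small: it reduces them to $\int_A q_{\theta_n}$, and the trivial bound $\int_A q_{\theta_n}\le 1$ is not smallness, so you still need tightness and uniform integrability of the densities $q_{\theta_n}$ themselves; this follows from the pointwise convergence $q_{\theta_n}\to q_{\theta_*}$ together with Scheff\'e's theorem (which upgrades it to $L^1$ convergence of densities), a half-line you should add to close the argument. Second, both you and the paper tacitly use continuity of $\theta\mapsto q_\theta(x)$, which is not literally among the stated assumptions (only $\theta\mapsto m_\theta(x)$ is assumed continuous); you at least invoke it explicitly, which is the honest way to present the step.
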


\begin{remark}[the oracle property]\label{rk:opt_pol}
From (\ref{Mhyp}), we deduce that $q_{\theta_*}$ is the unique minimizer of the risk function $r$. The risk function based on GMM or the Kullback-Leibler approach (described in section \ref{subsection:examples}) is derived from a certain targeted density $f$ in such a way that if $q_\theta = f $, then $r(\theta) $ is a minimum. Hence under the identifiability conditions of Theorem \ref{theorem:convergence_theta}, whenever $f\in \{q_\theta\,:\, \theta\in \Theta\}$, we have $q_{\theta_*} = f$. This means that asymptotically, AIS achives the same variance as the ``oracle'' importance sampling method based on the (fixed) sampler $f$. 
\end{remark}

\begin{remark}[optimal policy for normalized AIS]\label{rk:opt_pol_norm}
For normalized AIS, the asymptotic variance is $ u^T V(q_{\theta_*} , (\varphi^T\pi,\pi)^T) u  $, $V$ and $u$ are given in Corollary \ref{cor:normalized}. Minimizing w.r.t. $q_{\theta_*} $, we obtain the result (recalled in the introduction for nonadaptive strategies) that the optimal sampling policy for normalized AIS is proportional to $ |\varphi - \int \varphi \pi |\pi  $ (see section \ref{sec:rk_opt_pol} in the supplementary material).
\end{remark}

\section{Weighted AIS}\label{sec:weightedAIS}

We follow ideas from \cite[section 4]{douc+g+m+r:2007b} to develop a novel method to estimate $\int \varphi \pi$. The method is called weighted adaptive importance sampling (wAIS), and will automatically re-weights each sample depending on its accuracy. It allows in practice to forget poor samples generated during the early stages. For clarity, suppose that $\varphi : \mathbb R^d\to \mathbb R ^p$ with $p=1$.
Define the weighted estimate, for any function $\psi$,
\begin{align*}
& I_T^{(\alpha)}(\psi) =  N _T^{-1} \sum_{t=1}^T\alpha_{T,t} \sum_{i=1}^{n_t} \frac{\psi(x_{t,i})}{ q_{t-1}(x_{t,i})}.
\end{align*}
Note that for any sequence $(\alpha_{T,1},\ldots  \alpha_{T,T})$ such that $\sum_{t=1}^T n_t \alpha_{T,t}=N_t $,  $I_T^{(\alpha)}(\psi)$ is an unbiased estimate of $\int \psi$. Let $\sigma_t^2 = \mathbb E [ V(q_{t-1},\varphi) ]$ where $V(\cdot,\cdot)$ is defined in Equation (\ref{vari}). The variance of $I_T^{(\alpha)}(\varphi) $ is $  N_T^{-2} \sum_{t=1}^T \alpha_{T,t}^2 n_t \sigma_t^2$
which minimized w.r.t. $(\alpha)$ gives $\alpha_{T,t }\propto  \sigma_t^{-2}$, for each $t=1,\ldots T$. In \cite{douc+g+m+r:2007b}, a re-weighting is proposed using estimates of $\sigma_t$ (based on sample of the $t$-th stage).
We propose the following weights
\begin{align}\label{eq:weights_alpha}
 \alpha_{T,t}^{-1} \propto \sum_{i=1}^{n_t} \left(\frac{\pi(x_{t,i})}{q_{t-1}(x_{t,i}) } - 1\right) ^2,
\end{align}
satisfying the constraints $\sum_{t=1}^T n_t \alpha_{T,t}=N_t $. The wAIS estimate is the (weighted and normalized) AIS estimate given by
\begin{align}\label{eq:wAIS}
I_T^{(\alpha)}(\varphi\pi) / I_T^{(\alpha)}(\pi) .
\end{align}
In contrast with the approach in \cite{douc+g+m+r:2007b}, because our weights are based on the estimated variance of $\pi/q_{t-1}$, our proposal is free from the integrand $\varphi$ and thus reflects the overall quality of the $t$-th sample. This makes sense whenever many functions need to be integrated making inappropriate a re-weighting depending on a specific function.
Another difference with \cite{douc+g+m+r:2007b} is that we use the true expectation, $1$, in the estimate of the variance, rather than the estimate $(1/n_t) \sum_{i=1}^{n_t} {\pi(x_{t,i})} / {q_{t-1}(x_{t,i}) }$. This permits to avoid the situation (common in high dimensional settings) where a poor sampler $q_{t-1}$ is such that ${\pi(x_{t,i}}) / {q_{t-1}(x_{t,i}) }\simeq 0$, for all $i=1,\ldots n_t$, implying that the classical estimate of the variance is near $0$, leading (unfortunately) to a large weight. 

\section{Numerical experiments}

\begin{figure}
\centering\includegraphics[height=7cm,width = 6.5cm]{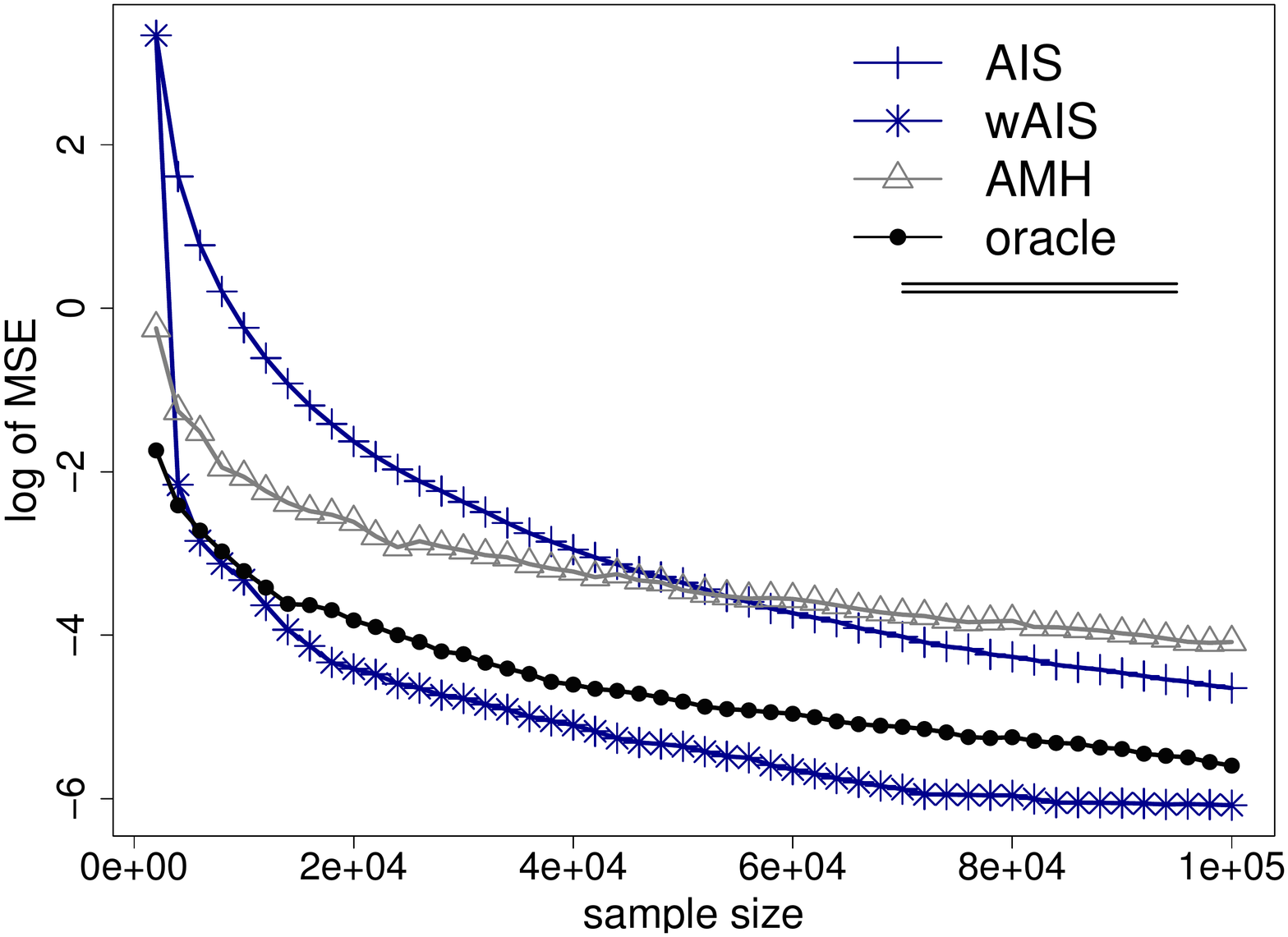}\includegraphics[height=7cm,width = 6.5cm]{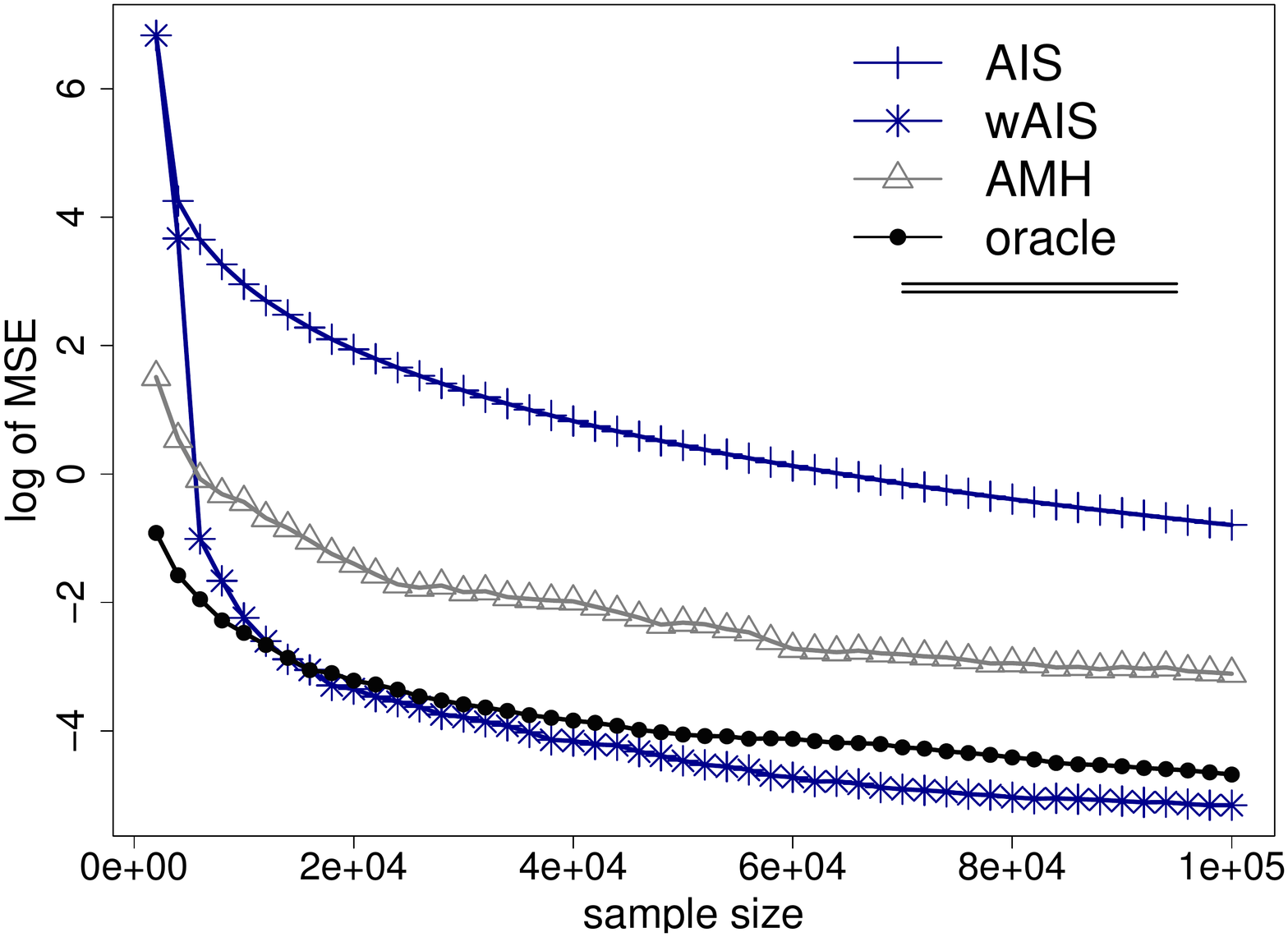}\\
\centering\includegraphics[height=7cm,width = 6.5cm]{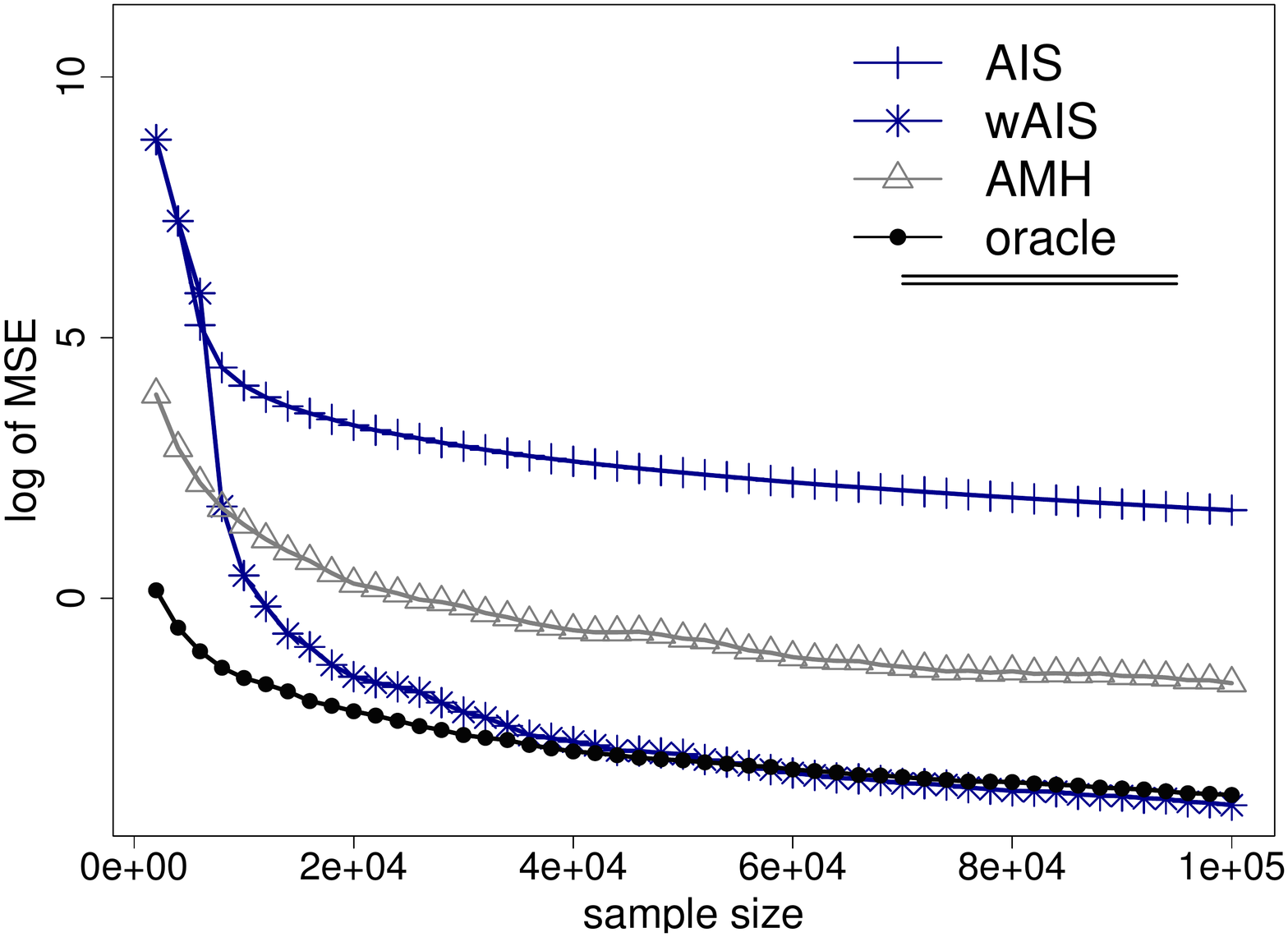}\includegraphics[height=7cm,width = 6.5cm]{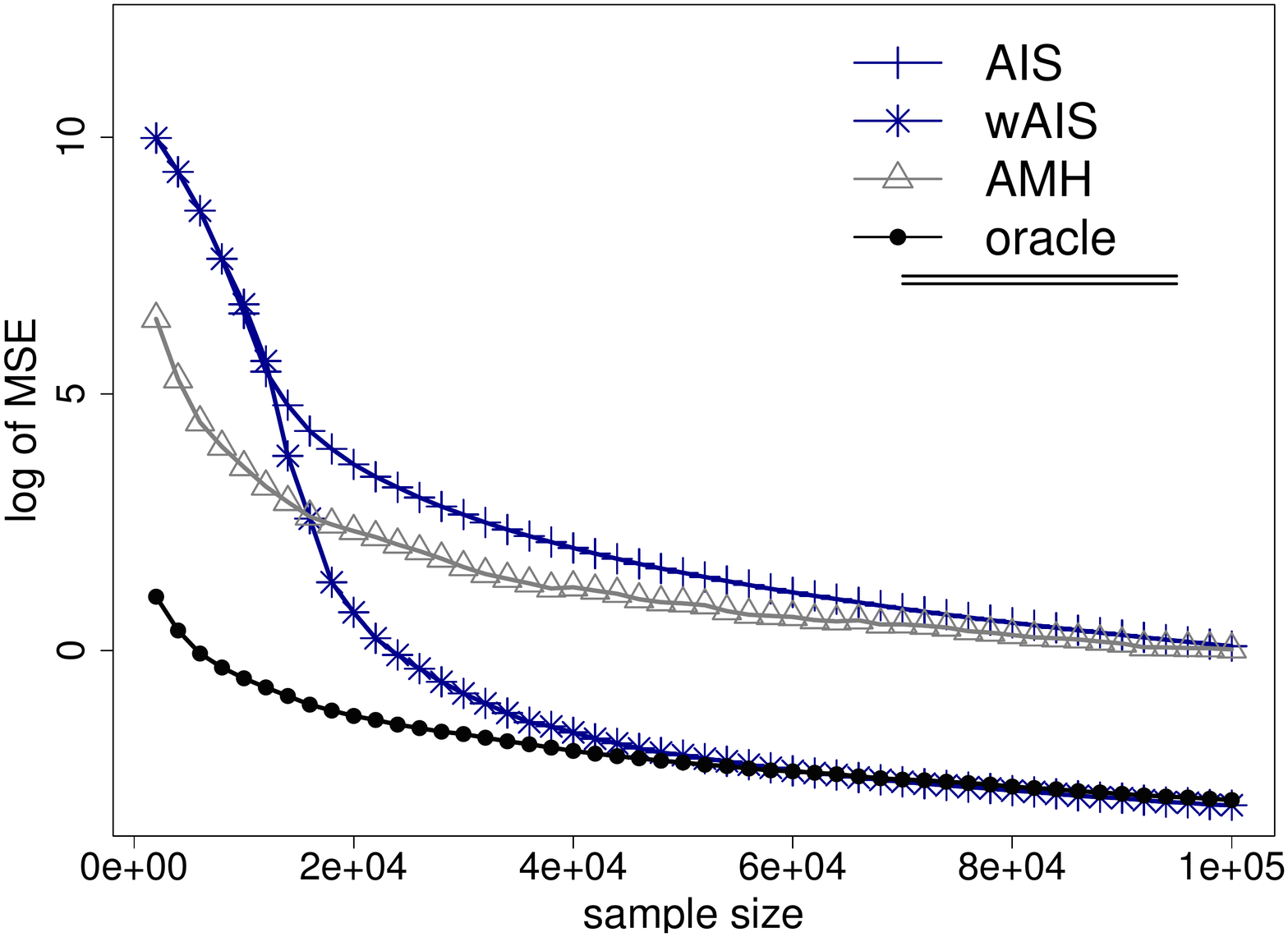}
\caption{From left to right $d=2,4,8,16$.  AIS and wAIS are computed with $T=50$ with a constant allocation policy $n_t = 2e3$. Plotted is the logarithm of the MSE (computed for each method over $100$ replicates) with respect to the number of requests to the integrand.}\label{fig:variance_stab}
\end{figure}

In this section, we study a toy Gaussian example to illustrate the practical behavior of AIS. Special interest is dedicated to the effect of the dimension $d$, the practical choice of $(n_t)$ and 
the gain given by wAIS introduced in the previous section. We set $N_T = 1e5$ and we consider $d=4,8,16$. The code is made available at \url{https://github.com/portierf/AIS}.

The aim is to compute $\mu_* = \int x \phi_{\mu_*,\sigma_*}(x)d x $ where $\phi_{\mu,\sigma}:\mathbb R^d\to \mathbb R $ is the probability density of $\mathcal N (\mu, \sigma^2I_d)$, $\mu_* = (5,\ldots  5)^T\in \mathbb R^d$, $\sigma_* = 1$, and $I_d$ is the identity matrix of size $(d,d)$. The sampling policy is taken in the collection of multivariate Student distributions of degree $\nu = 3$ denoted by $\{q_{\mu,\Sigma_0}\,:\, \mu \in \mathbb R^d \}$ with $\Sigma_0 =\sigma_0I_d(\nu-2)  /\nu$ and $\sigma_0 = 5$. The initial sampling policy is set as $\mu_0 = (0,\ldots 0)\in \mathbb R^d$. The mean $\mu_t$ is updated at each stage $t=1,\ldots T$ following the GMM approach as described in section \ref{sec:consistency_samp_pol}, leading to the simple update formula 
\begin{align*}
&\mu_t
= {\sum_{s=1}^t \sum_{i=1}^{n_s} x_{s,i} \frac{f(x_{s,i} )} { q_{s-1}(x_{s,i} )}} \left/  {\sum_{s=1}^t \sum_{i=1}^{n_s}  \frac{f(x_{s,i} )} { q_{s-1}(x_{s,i} )}} \right. ,
\end{align*}
with $f = \phi_{\mu_*,\sigma_*}$. In section \ref{sec:supp_update_variance} of the supplementary file, other results considering the update of the variance within the student family are provided.

As the results for the unnormalized approaches were far from being competitive with the normalized ones, we consider only normalized estimators. The (normalized) AIS estimate of $\mu_* $ is simply given by $ \mu_{t}$ as displayed above. The wAIS estimate of $\mu_*$ is computed using (\ref{eq:wAIS}) with weights (\ref{eq:weights_alpha}).

We also include the adaptive MH proposed in \cite{haario+s+t:2001}, where the proposal, assuming that $X_{i-1} = x$, is given by $\mathcal N \left(x , (2.4)^2 ( C_{i} + \epsilon I_d  )/d \right)$, if $ i>i_0$, and $ \mathcal N (x , I_d)$,  if $ i\leq i_0$, with $C_i$ the empirical covariance matrix of $(X_0,X_1,\ldots X_{i-1})$, $i_0 = 1000$ and $\epsilon = 0.05$ (other configurations as for instance using only half of the chain have been tested without improving the results). Finally we consider a so called ``oracle'' method : importance sampling with fix policy $\phi_{\mu_*,\sigma_*}$.

For each method that returns $\mu$, the mean square error (MSE) is computed as the average of $\|\mu - \mu_*\|^2$ computed over $100$ replicates of $\mu$.

\begin{figure}
\centering\includegraphics[height=7cm,width = 6.5cm]{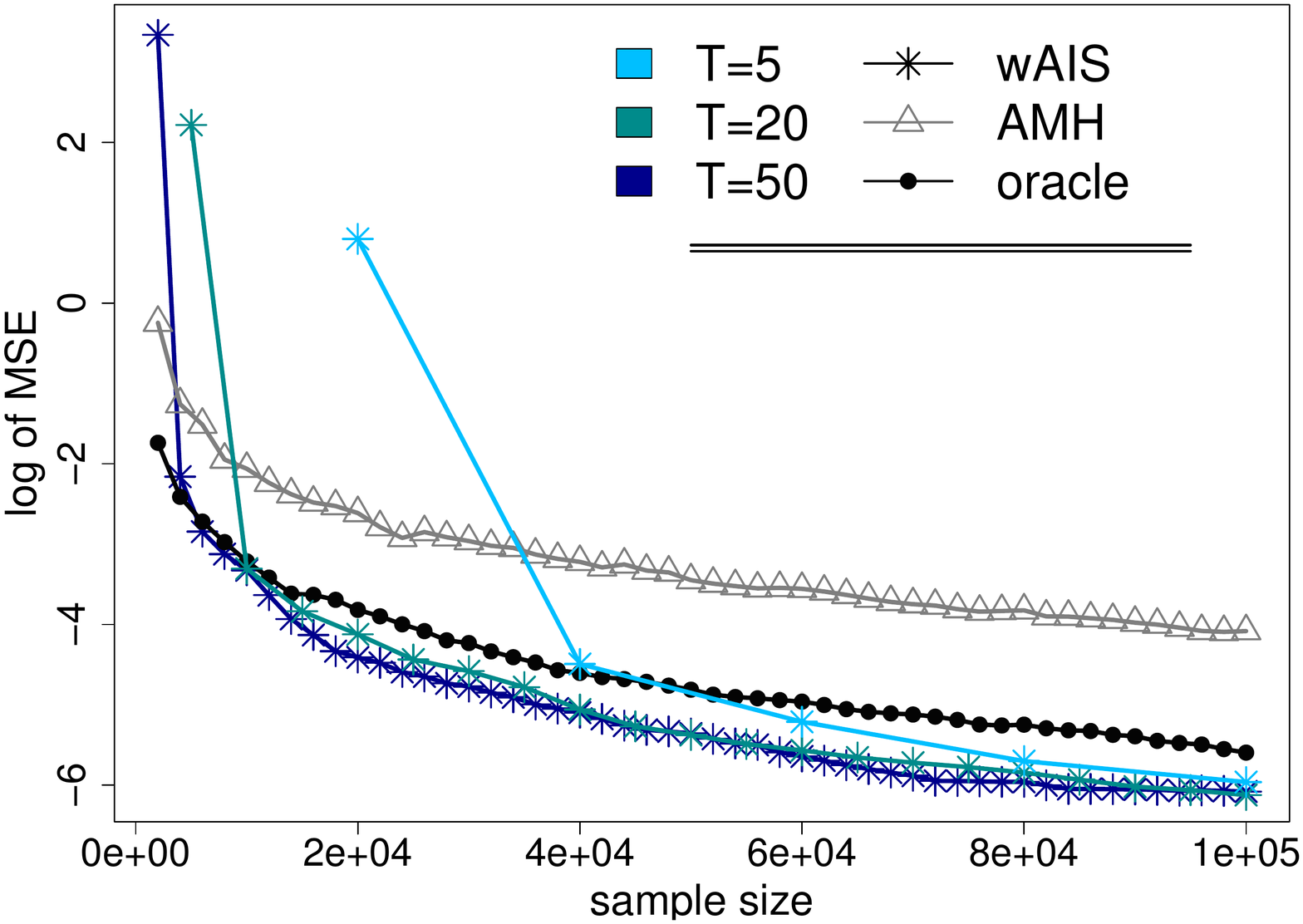}\includegraphics[height=7cm,width = 6.5cm]{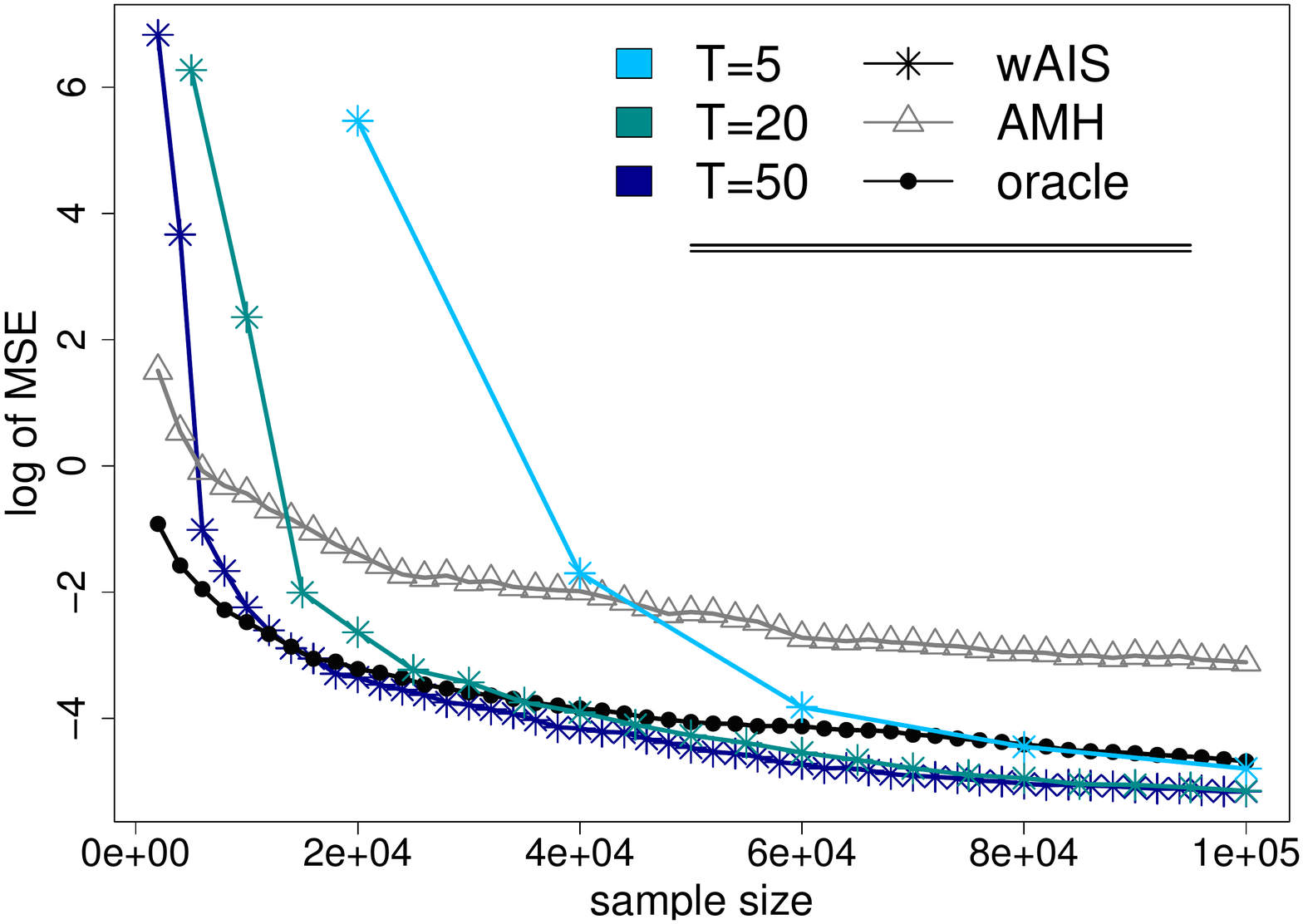}\\
\centering\includegraphics[height=7cm,width = 6.5cm]{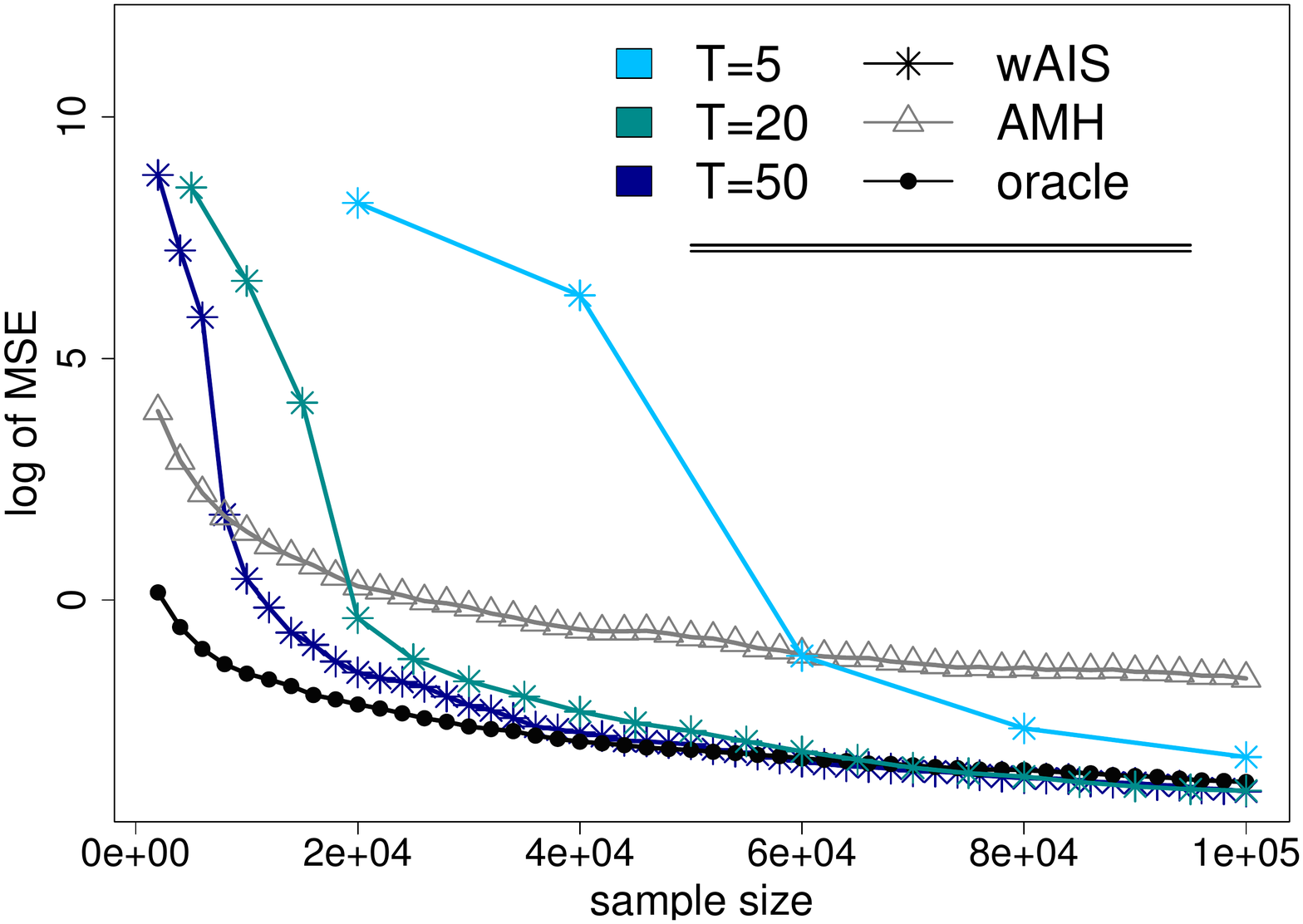}\includegraphics[height=7cm,width = 6.5cm]{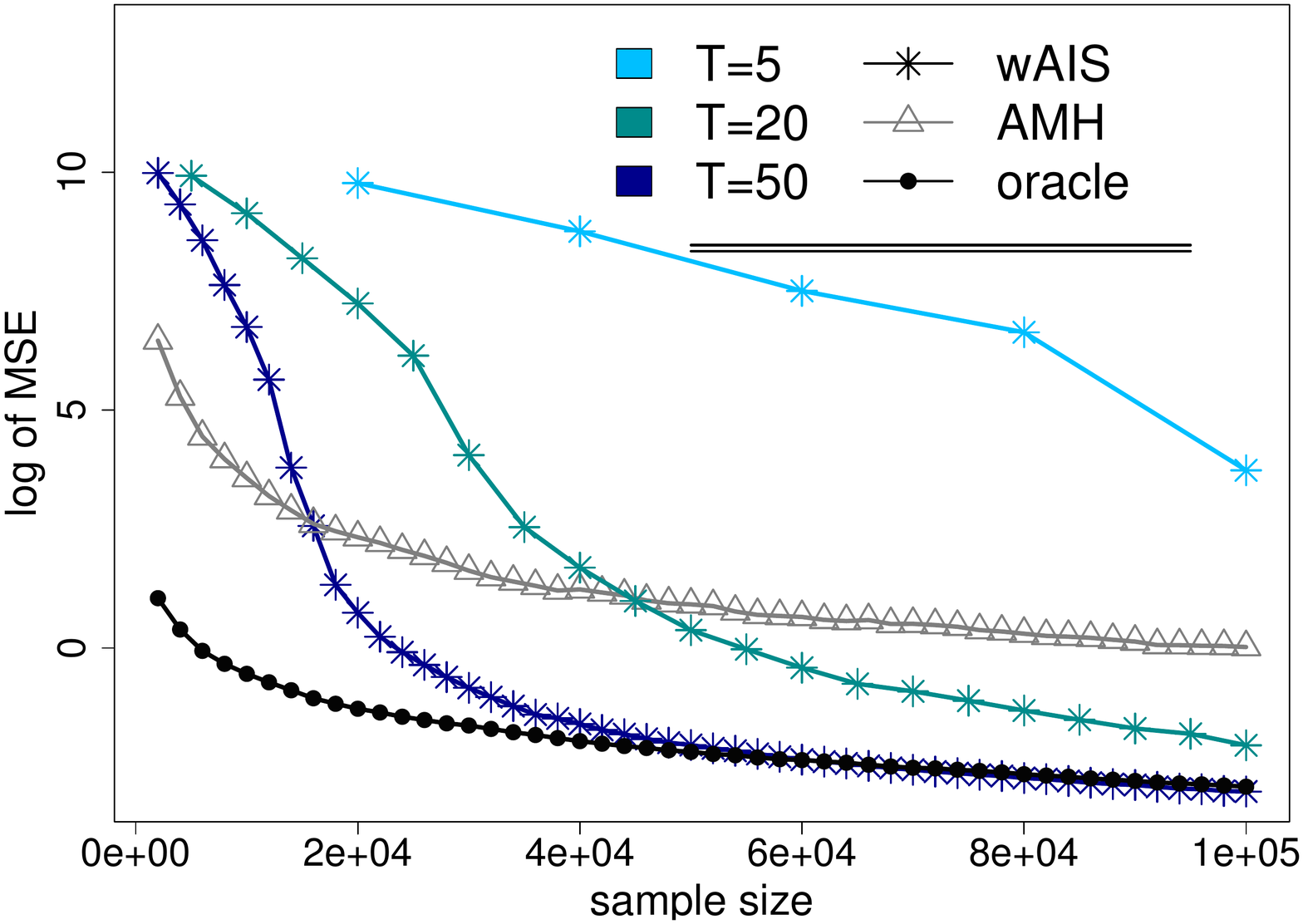}
\caption{From left to right $d=2,4,8,16$.  AIS and wAIS are computed with $T=5,20,50$, each with a constant allocation policy, resp. $n_t = 2e4,5e3,2e3$. Plotted is the logarithm of the MSE (computed for each method over $100$ replicates) with respect to the number of requests to the integrand.}\label{fig:comp_alloc}
\end{figure}

In Figure \ref{fig:variance_stab}, we compare the evolution of all the mentioned algorithms with respect to stages $t=1,\ldots T= 50$ with constant allocation policy $n_t = 2e3$ (for AIS and wAIS). The clear winner is wAIS. Note that the policy $\phi_{\mu_*,\sigma_*}$, which is not the optimal one (see Remark \ref{rk:opt_pol_norm}), seems to give worse results than the the policy $\phi_{\mu_*,5}$, as wAIS with \texttt{sig\_0} performs better than the ``oracle'' after some time. 

In Figure \ref{fig:comp_alloc}, we examine $3$ constant allocation policies given by $T = 50$ and $n_t = 2e3$;  $T= 20 $ and $n_t = 5e3$; $T =5$ and  $n_t = 2e4$. We clearly notice that the rate of convergence is influenced by the number of update steps (at least at the beginning). The results call for updating as soon as possible the sampling policy. This empirical evidence supports the theoretical framework studied in the paper which imposes no condition on the growth of $(n_t)$.

%

\subsubsection*{Acknowledgments} The authors are grateful to R\'emi Bardenet for useful comments and additional references.

%

\bibliographystyle{plain}

\bibliography{arkiv.bbl}

\newpage

\begin{appendices}

\section{Proofs of the stated results}

\subsection{Proof of Lemma \ref{prop:mg}}

For the first statement, it suffices to note that $\mathbb E[\Delta_j |\mathscr F_{j-1} ] = 0 $ because $x_j$
is drawn according to $ q_{j-1}$. For the second statement, the conditional independence implies that
\begin{align*}
\mathbb E\big[\Delta_j\Delta_j^T\,|\, \mathscr F_{j-1}\big]=V( q_{j-1},\varphi).
\end{align*}
\qed

\subsection{Proof of Theorem \ref{clt}}

We need to show that for each $\gamma\in\mathbb R^p$, $\langle \sqrt n (I_n-\int\varphi ) , \gamma \rangle \,\overset{\diff}{\to} \mathcal N (0, \gamma^TV^*\gamma)$. This reduces the proof to the case where $\varphi$ is a real-valued function, which is assumed below.

Since $\sqrt n\, ( I_n - \int\varphi  )=n^{-1/2} M _n$,
this theorem will be a consequence of Corollary 3.1 p.\,58 in \cite{hall} if we can prove that the martingale increments
\begin{align*}
&X_{n,j}= \frac1{\sqrt n}\Delta_j
\end{align*}
satisfy the following two conditions:
\begin{align}
\label{lindeb0}& \sum_{j=1}^n \mathbb E[X_{n,j}^2|\mathscr F_{j-1}]\rightarrow V_*,~~\text{in probability},\\
&\forall\varepsilon>0, ~~\sum_{j=1}^n \mathbb E[X_{n,j}^21_{|X_{n,j}|>\varepsilon}|\mathscr F_{j-1}]\rightarrow 0,~~\text{in probability}.
\label{lindeb}\end{align}
Reformulating Proposition \ref{prop:mg}, we get
\begin{align*}
\sum_{j=1}^n \mathbb E[X_{n,j}^2|\mathscr F_{j-1}]
=n^{-1}\langle M\rangle_n 
=V_*+n^{-1} \sum_{j=1}^n (V( q_{j-1},\varphi)-V_*) .
\end{align*}
By the Cesaro Lemma,  using (\ref{vcond}), the right term in the previous display goes to $0$ a.s., i.e., 
$n^{-1} \langle M\rangle_n\to V_* $. 

Concerning (\ref{lindeb}), we have
\begin{align}\label{ssda}
\sum_{j=1}^n \mathbb E[X_{n,j}^21_{|X_{n,j}|>\varepsilon}|\mathscr F_{j-1}] 
=\frac{1}{n}\sum_{j=1}^n \mathbb E[\Delta_j^21_{|\Delta_j|>\varepsilon\sqrt n }|\mathscr F_{j-1}].
 \end{align}
 Let us recall that
\begin{align*}
&\Delta_j=w_j(x_j)-\int\varphi, \\
&w_j(x)=\frac{\varphi (x)}{ q_{j-1} (x) },
\end{align*}
and introduce $I = \int\varphi $. Thus
\begin{align*}
\mathbb E[\Delta_j^2&1_{|\Delta_j|>\varepsilon \sqrt n }|\mathscr F_{j-1}] \\
&=\int (w_j(x)-I )^2
 1_{ \{|w_j(x)-I|>\varepsilon\sqrt n\}}  q_{j-1} (x)dx\\
&\le\int 2(w_j(x) ^2+I^2)1_{ \{ |w_j(x)|>\varepsilon\sqrt n-| I| \} }  q_{j-1} (x)dx\\
&=2\int \frac{\varphi(x)^2}{ q_{j-1}(x)}1_{ \{ |w_j(x)|>\varepsilon\sqrt n-|I|\} } dx
+2I^2\int 1_{ \{ |w_j(x)|>\varepsilon\sqrt n-|I| \} }  q_{j-1} (x)dx.
 \end{align*}
Let $\eta>0$. Assuming that $\sqrt n> |I|/\epsilon $ and applying $2$ times Markov inequality we obtain that
\begin{align*}
&\mathbb E[\Delta_j^21_{|\Delta_j|>\varepsilon \sqrt n }|\mathscr F_{j-1}] \\
& \leq \frac{2}{(\varepsilon\sqrt n-|I|)^{\eta} }\int \frac{|\varphi(x)|^{2+\eta}}{ q_{j-1}(x)^{1+\eta}}
dx+ \frac{2}{(\varepsilon\sqrt n-|I|)  } I^2\int  \left|\frac{\varphi(x) }{q_{j-1}(x)}\right| q_{j-1}(x) dx  \\
& \leq \frac{2}{(\varepsilon\sqrt n-|I|)^{\eta} } \sup_{j \in \mathbb N} \int \frac{|\varphi(x)|^{2+\eta}}{ q_{j}(x)^{1+\eta} }dx
+ \frac{2|I|^2   }{\varepsilon\sqrt n-|I|   }\int |\varphi(x)|dx  
\end{align*}
which together with (\ref{ssda}) implies (\ref{lindeb}).
\qed

\subsection{Proof of Theorem \ref{theorem:convergence_theta}}

Following \cite[Theorem 5.7]{vandervaart:1998}, we just need to show that
\begin{align}\label{vdv1}
&\sup_{\theta\in \Theta} |n^{-1}  R_n(\theta) - r(\theta) | \to 0 \quad \text{a.s.}\\
&\forall\varepsilon>0,~~\inf_{\theta\in\Theta,\, \|\theta -\theta_*\|>\epsilon} \int m_\theta > \int m_{\theta_*}.\label{vdv2}
\end{align}
Since $\Theta$ is compact, the second equation is satisfied because the integrability of $M$ implies, 
by the Lebesgue theorem, that the function $\theta\mapsto \int m_\theta$ is contituous.

Concerning (\ref{vdv1}), we shall apply Theorem~\ref{uln} (given in Section \ref{sec:sB2:supp} of the present supplementary material) with 
\begin{align*}
&H(\theta)=H(\theta,\omega) = \frac{m_\theta(X)}{q_0(X)}, ~~~~\text{where}~~ X\sim q_0\\
&H_j(\theta) = H_j(\theta,\omega) =\frac{m_\theta(x_j)}{ q_{j-1} (x_j)}.
\end{align*}
The two assumptions to verify, (H\ref{cond:ulln1}) and (H\ref{cond:ulln2}), are stated in Section \ref{sec:sB2:supp}. In fact, we only have to show that (\ref{equln1}), (\ref{equln3}) and (\ref{equln2}), expressed in (H\ref{cond:ulln1}),
hold true as the continuity of $\theta \mapsto H(\theta,\omega)$ almost surely, for each $\theta\in \Theta$, required in (H\ref{cond:ulln2}), is a consequence of the continuity of $ \theta\mapsto m_{\theta}(x)$. Notice that we have indeed $\mathbb E [H(\theta)] = \int m_\theta $, as (\ref{Mhyp}) implies that for each $\theta$, the support of $M$ is included in the support of $q_0$.

For (\ref{equln1}), we apply Theorem~\ref{general} (given in Section \ref{sec:sB3:supp} of the present supplementary material) firstly with $U_j=H_j(\theta_0)_+$.
Since $\mathbb E[U_j|\mathscr F_{j-1}]=\int m_{\theta_0}(x)_+dx$, we get
\begin{align*}
&\mathbb E[S_n]=\int m_{\theta_0}(x)_+dx,
\end{align*}
and
\begin{align*}
\var(S_n)&=\sum_{j=1}^n\var(U_j)\le\sum_{j=1}^n\mathbb E[U_j^2],
\end{align*}
where the previous equality follows from $\cov (U_i , U_j) = 0$, for all $i<j$. But, for each $j$, 
\begin{align*}
\mathbb E[U_j^2]
& =\mathbb E\bigg[\frac{m_{\theta_0}(x_j)^2}{ q_{j-1}(x_j)^2}\bigg]
=\mathbb E\int \frac{m_{\theta_0}(x)^2}{ q_{j-1}(x)}dx
\le \sup_{\theta\in \Theta} \int \frac{m_{\theta_0}(x)^2}{q_{\theta}(x)}dx.
\end{align*}
This proves that (\ref{equln1}) holds with $H_j(\theta_0)_+$ instead of $H_j(\theta_0)$.
But similarly it holds with $H_j(\theta_0)_-$ and we conclude for $H_j(\theta_0)$ by linearity.
Now (\ref{equln3}) reduces to
\begin{align*}
\int \sup_{\theta\in \Theta }|m_\theta(x)|dx<\infty
\end{align*}
which is true by assumption. 
Concerning (\ref{equln2}), we work similarly with 
\begin{align*}
U_j=\sup_{\theta\in B}|H_j(\theta)-H_j(\theta_0)|.
\end{align*}
Now
\begin{align*}
\mathbb E[S_n]=n\int\sup_{\theta\in B}|m_{\theta}(x)-m_{\theta_0}(x)|dx
=\mathbb E\bigg[\sup_{\theta\in B}\big|H(\theta) - H(\theta_0)\big|\bigg],
\end{align*}
and
\begin{align*}
\var(S_n)&=\sum_{j=1}^n\var(U_j)\le\sum_{j=1}^n\mathbb E[U_j^2],
\end{align*}
with
\begin{align*}
\mathbb E[U_j^2] = & \mathbb E\int \frac{ \sup_{\theta\in B} | {m_\theta(x)} - m_{\theta_0} (x)|^2 }{ q_{j-1}(x)} dx \le 2 \sup_{\theta\in \Theta} \int \frac{\sup_{\theta\in B} m_{\theta}(x)^2}{ q_{\theta}(x)}dx.
\end{align*}
This leads similarly to (\ref{equln2}).
\qed

\subsection{Proof of Theorem \ref{thm:final_th}}
Condition (\ref{Mhyp}) and the Lebesgue dominated convergence theorem gives that $\theta\mapsto V(q_\theta,\varphi)$ is continuous. Hence, in virtue of the continuous mapping theorem and the conclusion of Theorem \ref{theorem:convergence_theta}, Condition (\ref{vcond}) is satisfied. Condition (\ref{cond:lindeberg}) is trivially satisfied.
\qed

\section{Auxiliary results}\label{sec:sB:supp}

\subsection{Algebra related to Remark \ref{rk:opt_pol_norm}}\label{sec:rk_opt_pol}

We have
\begin{align*}
V(q,(\varphi\pi,\pi)) = \left(\begin{array}{cc}\overline{\rho_1^2}-\bar \rho_1^2 &\overline{\rho_1\rho_2}-\bar\rho_1\\
\overline{\rho_1\rho_2}-\bar\rho_1& \overline{\rho_2^2}-1\end{array}\right),~~~~ \rho_1 = \frac{\varphi \pi}{q},\ \rho_2=\frac{\pi}{q},
\end{align*}
where the bar means the expectation under $q(x)dx$. The gradient of $(s,m)\mapsto s/m$ at the limit $(\overline\rho_1,1)$ is $u = (1,-\overline\rho_1)$.
From Corollary \ref{cor:normalized}, the asymptotic variance is
\begin{align*}
u^TV(q,(\varphi\pi,\pi))  u &= (\overline{\rho_1^2}-\bar \rho_1^2)+\bar\rho_1^2(\overline{\rho_2^2}-1)
-2\bar\rho_1(\overline{\rho_1\rho_2}-\bar\rho_1)\\
&=\overline{\rho_1^2}+\bar\rho_1^2\overline{\rho_2^2}-2\bar\rho_1\overline{\rho_1\rho_2}\\
&=\mathbb E_q[(\rho_1-\rho_2\bar\rho_1)^2]\\
&=\mathbb E_q[\rho_2\rho_2(\rho_1 / \rho_2-\bar \rho_1)^2]\\
&=\int [q^{-1}\pi^2(\varphi-I)^2].
\end{align*}
Using Theorem 6.5 in \cite{evans:2000}, we derive the optimal sampling policy as claimed in Remark \ref{rk:opt_pol_norm}.

\subsection{A uniform law of large numbers}\label{sec:sB2:supp}

\noindent We consider a compact metric space $(\Theta,d)$, and a sequence of 
stochastic processes $H_i (\omega) = H_i(\theta,\omega):\Omega\rightarrow \mathbb R^d$, $i\ge 1$, $\theta\in \Theta$, such that:
\begin{enumerate}[{\bf(H1)}]
\item\label{cond:ulln1}
There exists a stochastic processes $H(\theta)=H(\theta,\omega)$, such that 
for all $\theta_0\in\Theta$
\begin{align}\label{equln1}
& \frac{1}{n} \sum_{i=1}^n H_i(\theta_0)~ \longrightarrow~ \mathbb E\big[H(\theta_0) \big]~~~~\text{a.s.}
\end{align}
In addition
\begin{align}\label{equln3}
& \mathbb E\Big[\sup_{\theta\in\Theta}\big|H(\theta)\big|\Big]~ <~\infty
\end{align}
and for any ball $B$ with center $\theta_0$
\begin{align}\label{equln2}
&\frac{1}{n} \sum_{i=1}^n \sup_{\theta\in B}
 \big|H_i(\theta) - H_i(\theta_0)\big|\longrightarrow \mathbb E\bigg[\sup_{\theta\in B}
 \big|H(\theta) - H(\theta_0)\big|\bigg]~~~~\text{a.s.}
 \end{align}
The measurability of the supremum is part of the assumptions.
\item\label{cond:ulln2}
For each $\theta_0\in\Theta$, almost surely
(this subset of $\Omega$ of probability 1 may depend on $\theta_0$),
the function $\theta\mapsto H(\theta,\omega)$
is continuous at $\theta_0$. 
\end{enumerate}

\begin{theorem}{\sc (Uniform law of large numbers)}
\label{uln}Under (H1) and (H2), the function 
\begin{align*}
h(\theta)=\mathbb E\big[H(\theta)\big]
\end{align*} 
is continuous and with probability 1
\begin{align}
\label{ulln}
\lim_n \sup_{\theta\in\Theta} \Big|\,h(\theta)-\frac{1}{n}\sum_{i=1}^n H_i(\theta)\,\Big| =0.
\end{align}
\end{theorem}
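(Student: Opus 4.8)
The plan is to run the classical compactness-and-covering argument for uniform laws of large numbers, feeding off the pointwise convergence \eqref{equln1}, the oscillation convergence \eqref{equln2}, and the integrable envelope \eqref{equln3}. The single analytic device underlying everything is the following oscillation bound, which I would establish first. Fix $\theta_0\in\Theta$ and set $\omega(\delta)=\sup_{\theta\in B(\theta_0,\delta)}|H(\theta)-H(\theta_0)|$. By (H\ref{cond:ulln2}) we have $\omega(\delta)\downarrow 0$ a.s.\ as $\delta\downarrow 0$, while $\omega(\delta)\le 2\sup_{\theta\in\Theta}|H(\theta)|$ is integrable by \eqref{equln3}; dominated (equivalently monotone) convergence then yields $\mathbb E[\omega(\delta)]\to 0$. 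In particular, for every $\varepsilon>0$ and every $\theta_0$ there is a radius $\delta_{\theta_0}>0$ with $\mathbb E\big[\sup_{\theta\in B(\theta_0,\delta_{\theta_0})}|H(\theta)-H(\theta_0)|\big]<\varepsilon$.

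This same bound gives the continuity of $h$: for $\theta\in B(\theta_0,\delta)$ one has $|h(\theta)-h(\theta_0)|\le\mathbb E|H(\theta)-H(\theta_0)|\le\mathbb E[\omega(\delta)]\to 0$, so $h$ is continuous at every $\theta_0$. Next I would exploit compactness. The open balls $\{B(\theta_0,\delta_{\theta_0})\}_{\theta_0\in\Theta}$ cover $\Theta$, so there is a finite subcover centered at $\theta_1,\dots,\theta_N$ with radii $\delta_1,\dots,\delta_N$. For an arbitrary $\theta\in\Theta$, choose $k$ with $\theta\in B(\theta_k,\delta_k)$ and triangulate:
\begin{align*}
\Big|\tfrac1n\sum_{i=1}^n H_i(\theta)-h(\theta)\Big|
&\le \tfrac1n\sum_{i=1}^n\sup_{\vartheta\in B(\theta_k,\delta_k)}|H_i(\vartheta)-H_i(\theta_k)| \\
&\quad + \Big|\tfrac1n\sum_{i=1}^n H_i(\theta_k)-h(\theta_k)\Big| + |h(\theta_k)-h(\theta)|.
\end{align*}
The first term converges a.s.\ to $\mathbb E[\sup_{\vartheta\in B(\theta_k,\delta_k)}|H(\vartheta)-H(\theta_k)|]<\varepsilon$ by \eqref{equln2}; the second tends to $0$ a.s.\ by \eqref{equln1}; the third is $\le\varepsilon$ by the oscillation bound. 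Since only the finitely many centers $\theta_1,\dots,\theta_N$ enter these estimates, on the intersection (still an event of probability $1$) of the corresponding convergence events we obtain $\limsup_n\sup_{\theta\in\Theta}\big|n^{-1}\sum_i H_i(\theta)-h(\theta)\big|\le 2\varepsilon$.

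Finally, applying this with $\varepsilon=1/m$ and intersecting over $m\in\mathbb N$ — a countable intersection of probability-one events, hence again of probability one — yields the almost-sure uniform convergence \eqref{ulln}. The measurability of all the suprema appearing above is granted by (H\ref{cond:ulln1}), so no separate measurability argument is required.

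The step I expect to be the main obstacle is precisely the oscillation control, i.e.\ upgrading the merely \emph{pointwise} continuity of $\theta\mapsto H(\theta,\omega)$ in (H\ref{cond:ulln2}) into smallness \emph{in expectation} of the increments over shrinking balls. This is the only place where the integrable envelope \eqref{equln3} is genuinely needed, as it licenses the dominated-convergence passage $\mathbb E[\omega(\delta)]\to 0$; without it the pointwise limit of $\omega(\delta)$ would not transfer to its mean. Everything else is a routine triangulation against a finite net furnished by the compactness of $\Theta$.
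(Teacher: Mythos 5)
Your proposal is correct and follows essentially the same route as the paper's own proof: the same oscillation functional $\mathbb E\big[\sup_{\theta\in B(\theta_0,\delta)}|H(\theta)-H(\theta_0)|\big]$ controlled via (H\ref{cond:ulln2}), the envelope \eqref{equln3}, and dominated convergence, the same finite-subcover triangulation into three terms handled by \eqref{equln2}, \eqref{equln1}, and continuity of $h$, and the same countable intersection of probability-one events over $\varepsilon=1/m$. No gaps; the only cosmetic difference is that you make explicit the role of the envelope in licensing dominated convergence, which the paper leaves implicit.
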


\begin{proof} 
Let us consider, for any $\theta_0\in \Theta$, the function 
\begin{align*}
f_{\theta_0}(\eta)&=\mathbb E\Big[~\sup_{d(\theta,\theta_0)<\eta}\big|H(\theta) - H(\theta_0)\big|\Big].
\end{align*}
Then $f_{\theta_0}(\eta)$ tends to 0 as $\eta$ tends to 0, 
because of (H2) and Lebesgue's dominated convergence Theorem. 
This implies in particular the continuity of $h(\theta)$ since clearly
\begin{align*}
\sup_{d(\theta,\theta_0)<\eta}\big|h(\theta) - h(\theta_0)\big|
=\sup_{d(\theta,\theta_0)<\eta}\big|\mathbb E[H(\theta) - H(\theta_0)]\big|
\leqslant f_{\theta_0}(\eta).
\end{align*}
Fix $\varepsilon>0$. 
For any $\theta_0$, there exists $\eta(\theta_0)>0$ 
such that $f_{\theta_0}(\eta(\theta_0))<\varepsilon$. 
The open balls centered at $\theta\in\Theta$ 
with radius $\eta(\theta)$ form a covering of cover $\Theta$; by compacity, a finite sub-covering exists:
\begin{align*}
\Theta=\cup_{j=1}^J B_j,
~~~B_j=\big\{\theta:~d(\theta,\theta_j)<\eta(\theta_j)\big\}.
\end{align*}
For any $\theta\in\Theta$, consider $j=j(\theta)$ the smallest $j$ such that $\theta\in B_j$, and write:
\begin{align*}
\frac{1}{n} \sum_{i=1}^n H_i(\theta) - h(\theta)
=\frac{1}{n} \sum_{i=1}^n \{H_i(\theta) - H_i(\theta_j)\}+
\frac{1}{n} \sum_{i=1}^n \{H_i(\theta_j) - h(\theta_j)\}
+(h(\theta_j) - h(\theta)).
\end{align*}
These three terms are functions of $\theta$, and we need to bound the uniform norm of
them, not forgetting that $j$ depends on $\theta$. 
The supremum of the third one is smaller that $\varepsilon$; 
the supremum of the second one $Z_n(J)$ tends to $0$ as $n$ tends to infinity: $J$ depends on $\epsilon $ but is finite, hence there exists a set $A_\epsilon $ such that $\mathbb P(A_\epsilon) = 1 $ and $\forall\omega\in A_\epsilon$, $Z_n(J)\to 0$. 
Then set $A= \cap_{k\geq 1} A_{1/k}$, it holds that $\forall\omega\in A$, $Z_n(J)\to 0$. 
The first term is the only difficult one; 
its uniform norm is smaller than:
\begin{align*}
\varphi_n= \sup_j \frac{1}{n} \sum_{i=1}^n \sup_{\theta\in B_j}
\big|H_i(\theta) - H_i(\theta_j)\big|.
\end{align*}
But with probability 1, by virtue of (\ref{equln2})
\begin{align*}
\lim_n \varphi_n
=\sup_j f_{\theta_j}(\eta) \leqslant \varepsilon.
 \end{align*}
We have shown that the l.h.s. of (\ref{ulln}) is asymptotically smaller than $2\varepsilon$; 
since $\varepsilon$ is arbitrary, it actually vanishes.\end{proof}

\subsection{A law of large numbers}\label{sec:sB3:supp}
We present here a simple way to obtain the law of large numbers. This will be used for checking (\ref{equln1}) and
(\ref{equln2}). 
\begin{theorem}\label{general}
Let $U_n,n\ge 1$ be a sequence of random variables and $S_n=U_1+U_2+...U_n$ such that:
\begin{align*}
&U_n\ge 0~~~w.p.1\\
&n^{-1}\mathbb E[S_n]\longrightarrow l\\
&\var(S_n)\le cn
\end{align*}
for some real numbers $c\geq 0$ and $l\geq 0$, then
\begin{align*}
&\frac{S_n}n\longrightarrow l~~~w.p.1.
\end{align*}
\end{theorem}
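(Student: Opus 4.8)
The plan is to establish almost-sure convergence first along a sparse deterministic subsequence, where a direct second-moment estimate gives summable tail probabilities, and then to transfer this to the full sequence by exploiting the monotonicity of $S_n$ guaranteed by $U_n\ge 0$.

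First I would work along the subsequence $n_k=k^2$. By the variance hypothesis, $\var(S_{n_k})\le c\,n_k=c\,k^2$, so Chebyshev's inequality yields
\[
\mathbb P\!\left(\Big| \frac{S_{n_k} - \mathbb E[S_{n_k}]}{n_k}\Big| > \varepsilon\right)
\le \frac{\var(S_{n_k})}{\varepsilon^2 n_k^2}
\le \frac{c}{\varepsilon^2 k^2}.
\]
Since $\sum_k k^{-2}<\infty$, the Borel--Cantelli lemma shows that for each fixed $\varepsilon$ the event on the left occurs only finitely often almost surely; taking the union of the resulting null sets over a countable sequence $\varepsilon=1/m$ gives $n_k^{-1}\big(S_{n_k}-\mathbb E[S_{n_k}]\big)\to 0$ a.s. Combined with the hypothesis $n_k^{-1}\mathbb E[S_{n_k}]\to l$, this produces $S_{n_k}/n_k\to l$ almost surely.

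Next I would fill in the gaps. For $n_k\le n<n_{k+1}$, nonnegativity of the increments makes $S_n$ nondecreasing, so $S_{n_k}\le S_n\le S_{n_{k+1}}$, whence
\[
\frac{n_k}{n_{k+1}}\cdot\frac{S_{n_k}}{n_k}
\;\le\; \frac{S_n}{n}
\;\le\; \frac{n_{k+1}}{n_k}\cdot\frac{S_{n_{k+1}}}{n_{k+1}}.
\]
Because $n_{k+1}/n_k=(k+1)^2/k^2\to 1$, both bounding quantities converge almost surely to $l$ by the previous step, and the sandwich forces $S_n/n\to l$ a.s.

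The argument is almost entirely routine; the only point requiring care is the final sandwich, and there the crucial structural input is the sign constraint $U_n\ge 0$, which is exactly what makes $S_n$ monotone and lets a slowly growing subsequence control all intermediate indices. Without nonnegativity one would instead need a maximal inequality (e.g.\ via a Doob- or Rademacher--Menshov-type bound), so I expect this monotonicity step to be the conceptual crux, even though it is technically short.
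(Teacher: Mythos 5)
Your proof is correct and takes essentially the same route as the paper: almost-sure convergence along the subsequence $n_k=k^2$ via a second-moment summability estimate, followed by the sandwich argument that uses $U_n\ge 0$ to control intermediate indices. The only cosmetic difference is that the paper deduces the subsequence convergence from finiteness of $\mathbb E\big[\sum_k \big((S_{k^2}-\mathbb E[S_{k^2}])/k^2\big)^2\big]$ rather than from Chebyshev plus Borel--Cantelli, which is the same variance bound packaged differently.
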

\begin{proof} 
The trick in this proof is to first derive the result for $S_{n^2}/n^2$. Then a sandwich formula will permit to conclude for $S_n/n$. We have
\begin{align*}
&\mathbb E\left[\sum_n\left(\frac{S_{n^2}-\mathbb E[S_{n^2}]}{n^2}\right)^2\right]\le \sum_n\frac{c}{n^2}<\infty.
\end{align*}
Thus 
\begin{align*}
\sum_n\left(\frac{S_{n^2}-\mathbb E[S_{n^2}]}{n^2}\right)^2\text{  is finite w.p.1,}
\end{align*}
implying that  $(S_{n^2}-\mathbb E[S_{n^2}])/n^2$ converges to zero, almost surely. Hence $ {S_{n^2}}/{n^2}$ converges
to $l$.
Notice that if $n^2\le k\le (n+1)^2$:
\begin{align*}
&\frac{S_{n^2}}{n^2}\frac{n^2}{(n+1)^2}\le \frac{S_k}{k}\le\frac{S_{(n+1)^2}}{(n+1)^2}\frac{(n+1)^2}{n^2}
\end{align*}
and since both side terms tend to $l$, the result is proved.
\end{proof}

\section{Additional numerical illustrations}\label{sec:supp_update_variance}

In the numerical experiments furnished in the paper, the family of sampling policy has a fixed variance. Now we update the sampling policy according to the mean and the variance.

As detailed in the paper, we wish to compute $\mu_* = \int x \phi_{\mu_*,\sigma_*}(x)d x $ where $\phi_{\mu,\sigma}:\mathbb R^d\to \mathbb R $ is the probability density of $\mathcal N (\mu, \sigma^2I_d)$, $\mu_* = (5,\ldots  5)^T\in \mathbb R^d$, $\sigma_* = 1$, and $I_d$ is the identity matrix of size $(d,d)$. In contrast with the situation described in the paper, the sampling policy is now chosen in the collection of multivariate Student distributions of degree $\nu = 3$ denoted by $\{q_{\mu,\Sigma}\,:\, \mu \in \mathbb R^d ,\, \Sigma \in \mathbb R^{d\times d}\}$. The initial sampling policy is set as $\mu_0 = 0$ and $\Sigma_0 =  \sigma_0I_d(\nu-2)/\nu$ with $\sigma_0 = 5$. The mean $\mu_t$ and the variance $\Sigma_t$ are updated at each stage $t=1,\ldots T$ following the GMM approach as described in section \ref{sec:consistency_samp_pol} of the paper, leading to the simple update formulas, (\ref{muhat}) for $\mu_t$ and (\ref{sigmahat}) for $\Sigma_t$, with $f = \phi_{\mu_*,\sigma_*}$ (quoted equations are given in the paper). The variance estimation will be tuned : (i) complete variance estimation as described by (\ref{sigmahat}), refereed to as \texttt{sig\_1}; (ii) estimation restricted to the diagonal with $0$ elsewhere, refereed to as \texttt{sig\_1/2}; (iii) and without estimating the variance at all, refereed to as \texttt{sig\_0}. To avoid degeneracy of the variance estimation in (i) and (ii), we add $\sigma_0  / \max(1, N_t^{(\text{eff})})^{1/2} $ in the diagonal of $\Sigma_t$, with $N_t^{(\text{eff})} = \sum_{t=1}^t\sum_{i=1}^{n_t} \phi_{\mu_*,\sigma_*} (x_{s,i})/ q_{s-1}(x_{s,i}) $. The method described in (iii), \texttt{sig\_0}, is the one considered in the paper.

For each method that returns $\mu$, the mean square error (MSE) is computed as the average of $\|\mu - \mu_*\|^2$ computed over $100$ replicates of $\mu$.

\begin{figure}
\centering\includegraphics[height=7cm,width = 6.5cm]{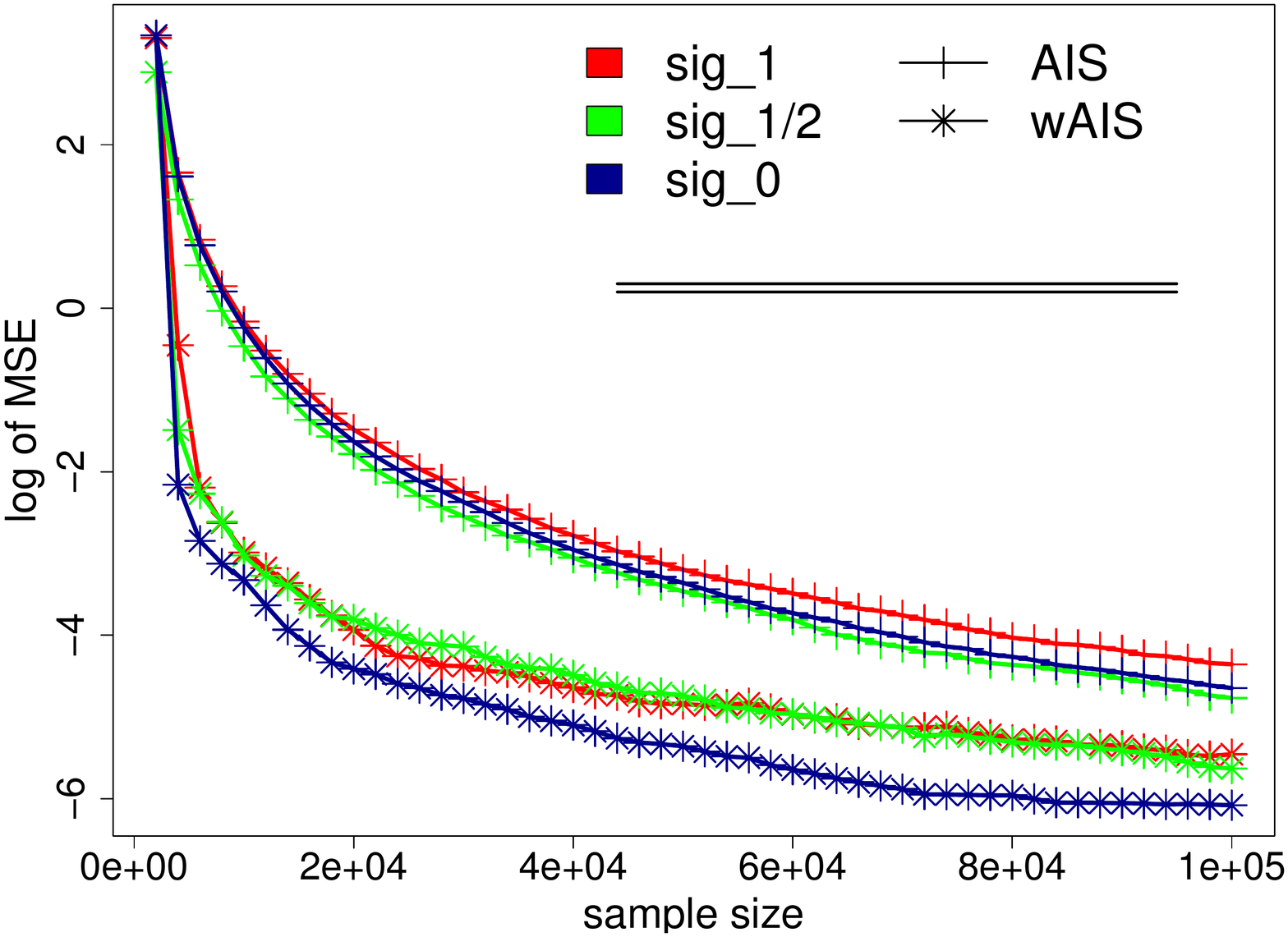}\includegraphics[height=7cm,width = 6.5cm]{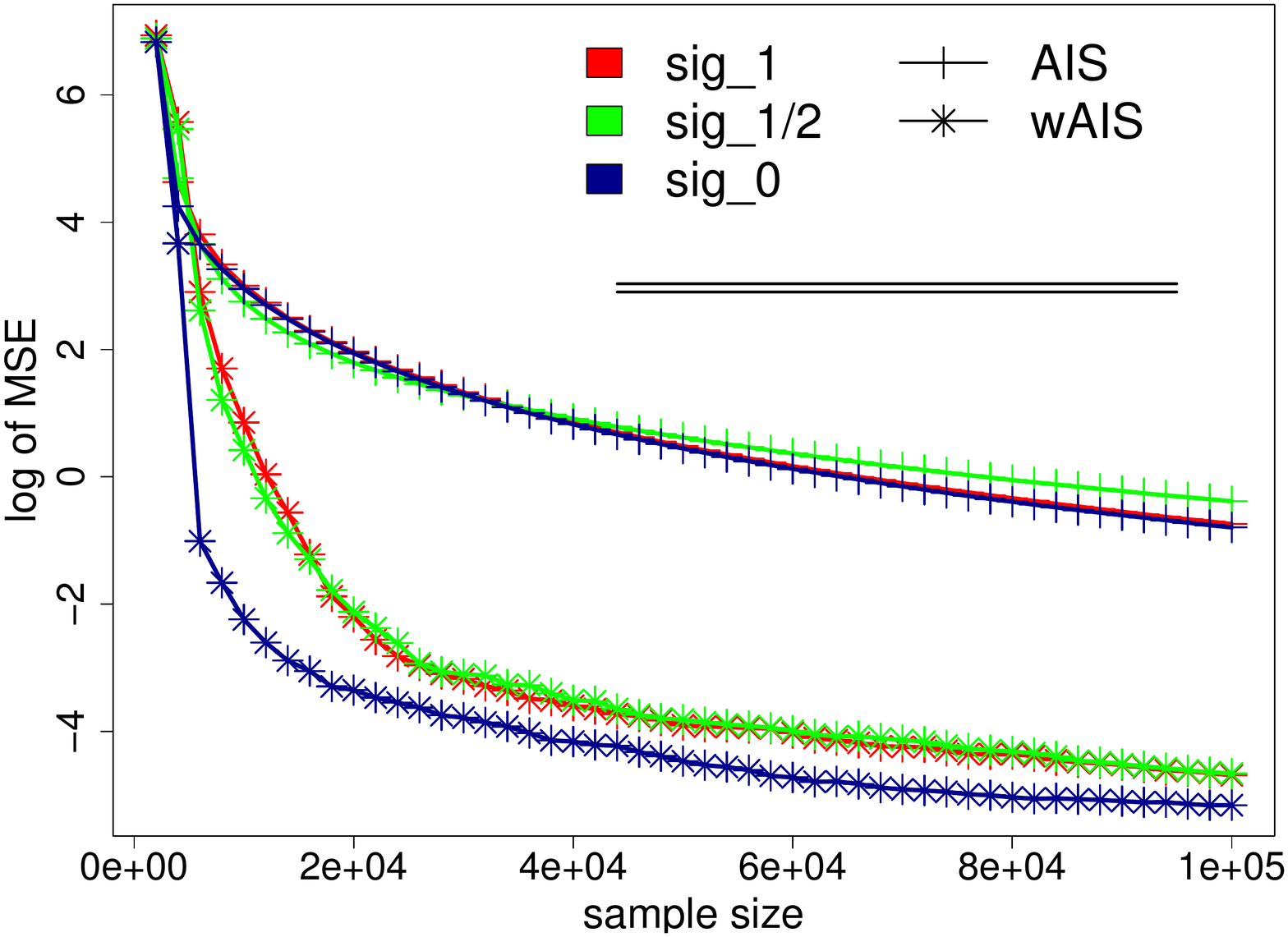}
\includegraphics[height=7cm,width = 6.5cm]{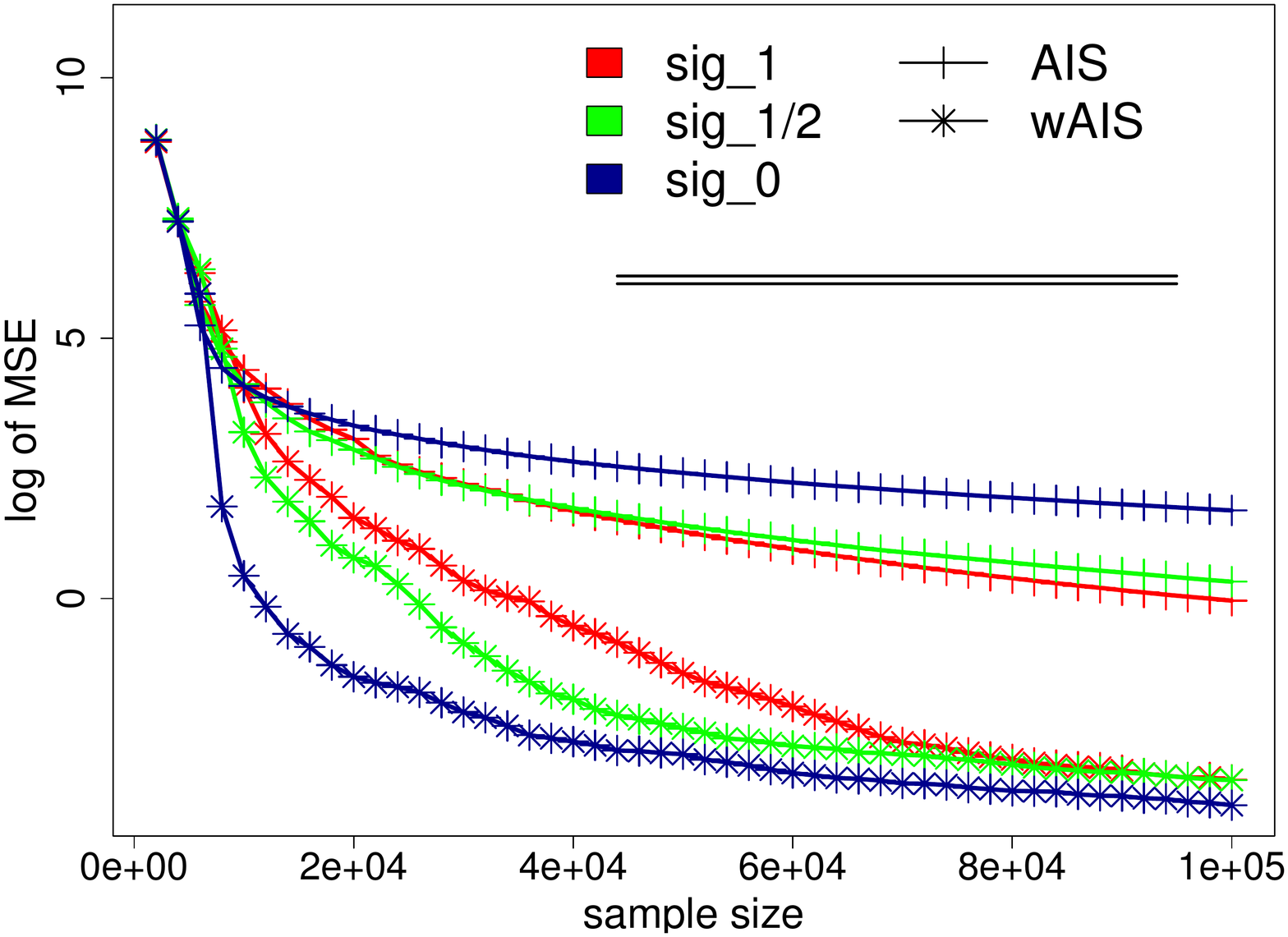}\includegraphics[height=7cm,width = 6.5cm]{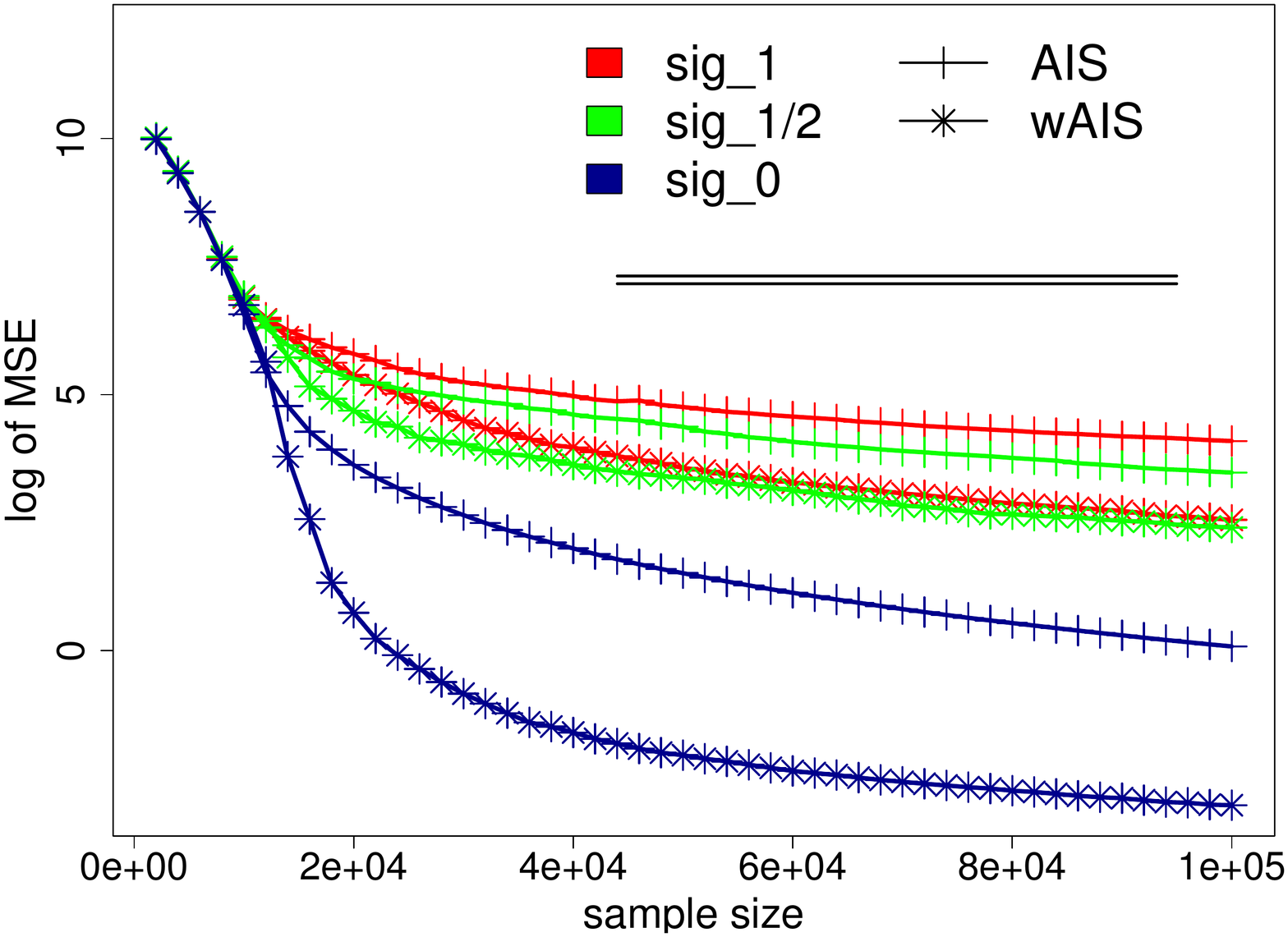}
\caption{From left to right $p=2,4,8,16$.  AIS and wAIS are computed with $T=5,20,50$, each with a constant allocation policy, resp. $n_t = 2e4,5e3,2e3$. Different options are considered for estimating the variance : \texttt{sig\_1}, \texttt{sig\_1/2}, \texttt{sig\_0} (see in the text). Plotted is the logarithm of the MSE (computed for each method over $100$ replicates) with respect to the number of requests to the integrand.}\label{fig:comp_sigma}
\end{figure}

 In Figure \ref{fig:comp_sigma}, we compare the evolution of all the mentioned algorithms with respect to stages $t=1,\ldots T= 50$ with constant allocation policy $n_t = 2e3$ (for AIS and wAIS). The clear winner is wAIS without estimating the variance \texttt{sig\_0}. Estimating the variance from the beginning of the procedure is slowing down the convergence especially in high dimension. This is because the larger the variance, the more exhaustive the exploration of the space.  




\end{appendices}

\end{document}